\documentclass[a4paper, 12pt]{article}
\pdfoutput=1

\usepackage[T1]{fontenc}
\usepackage[utf8]{inputenc}

\usepackage{amsfonts,amsmath,amssymb,mathtools,dsfont,microtype} 
\usepackage[dvipsnames]{xcolor} 
\usepackage[noBBpl]{mathpazo} 

\DeclareFontFamily{U} {cmr}{}

\DeclareFontShape{U}{cmr}{m}{n}{
	<-6> cmr5
	<6-7> cmr6
	<7-8> cmr7
	<8-9> cmr8
	<9-10> cmr9
	<10-12> cmr10
	<12-> cmr12}{}

\DeclareSymbolFont{Xcmr} {U} {cmr}{m}{n}
\DeclareMathSymbol{\Delta}{\mathord}{Xcmr}{'001}
\DeclareMathSymbol{\Upsilon}{\mathord}{Xcmr}{'007}
\DeclareMathSymbol{\Omega}{\mathord}{Xcmr}{'012}

\usepackage[labelsep = period, labelfont = bf, justification = centering]{caption}
\usepackage{float,graphicx,subcaption}
\DeclareCaptionSubType*[roman]{figure}

\usepackage{booktabs,makecell}
\usepackage{enumitem,mdwlist}
\setlist[itemize]{topsep=0ex,itemsep=0ex,parsep=0.4ex}
\setlist[enumerate]{topsep=0ex,itemsep=0ex,parsep=0.4ex}

\usepackage{pgf,tikz,ifthen,calc}
\usepackage{tkz-euclide}
\tkzSetUpPoint[fill=black, size = 3pt]
\tkzSetUpLine[color=black, line width=0.6pt]
\tkzSetUpCircle[color=black, line width=0.6pt]

\usepackage[left = 2.5cm, right = 2.5cm, top = 2.5cm, bottom = 2.5cm, headsep = 12pt, headheight = 15pt]{geometry}
\usepackage{parskip}

\usepackage[hyphens]{url} 
\usepackage[linktoc = all, hidelinks, colorlinks, unicode=true, pagebackref=true, hypertexnames=false]{hyperref} 
\usepackage[capitalise, compress, nameinlink, noabbrev]{cleveref} 

\hypersetup{linkcolor={blue!70!black}, citecolor={green!70!black}, urlcolor={blue!70!black}}

\usepackage{amsthm,thmtools,thm-restate}
\makeatletter
\@ifundefined{newcounteralias}{}{%
  \renewcommand\thmt@autorefsetup{%
    \@xa\def
    \csname\thmt@envname autorefname\@xa\endcsname
    \@xa{\thmt@thmname}%
  }%
}
\makeatother

\declaretheorem[name = Theorem, numberwithin = section, style = plain]{thm}
\declaretheorem[name = Claim, numberwithin = thm, style = plain]{claim}

\declaretheorem[name = Conjecture, numberlike = thm, style = plain]{conj}

\declaretheorem[name = Lemma, numberlike = thm, style = plain]{lem}

\declaretheorem[name = Remark, numberlike = thm, style = definition]{rmk}
\crefname{defi}{Definition}{Definitions}
\crefname{thm}{Theorem}{Theorems}
\crefname{lem}{Lemma}{Lemmas}
\crefname{conj}{Conjecture}{Conjectures}
\crefname{claim}{Claim}{Claims}
\crefname{cor}{Corollary}{Corollaries}
\crefname{obs}{Observation}{Observations}
\crefname{prop}{Proposition}{Propositions}
\crefname{que}{Question}{Questions}
\crefname{rmk}{Remark}{Remarks}
\crefname{section}{\S}{\S\S}
\crefname{subsection}{\S}{\S\S}
\crefname{appendix}{Appendix}{Appendices}
\newenvironment{proofclaim}[1][Proof of claim]{\begin{proof}[#1]}{\end{proof}}
\numberwithin{equation}{section}

\DeclareFontFamily{U}{matha}{\hyphenchar\font45}
\DeclareFontShape{U}{matha}{m}{n}{
	<5> <6> <7> <8> <9> <10> gen * matha
	<10.95> matha10 <12> <14.4> <17.28> <20.74> <24.88> matha12
}{}
\DeclareSymbolFont{matha}{U}{matha}{m}{n}

\DeclareMathSymbol{\specialuparrow}{\mathrel}{matha}{"D2}
\DeclareMathSymbol{\specialrightarrow}{\mathrel}{matha}{"D1}

\renewcommand*{\backref}[1]{}
\renewcommand*{\backrefalt}[4]{
	\ifcase #1 Not cited.%
	\or $\specialuparrow$#2%
	\else $\specialuparrow$#2%
	\fi%
}

\renewcommand{\epsilon}{\varepsilon}
\renewcommand{\ge}{\geqslant}
\renewcommand{\le}{\leqslant}
\renewcommand{\geq}{\geqslant}
\renewcommand{\leq}{\leqslant}

\renewcommand{\subset}{\subseteq}

\DeclarePairedDelimiter{\abs}{\lvert}{\rvert}
\DeclarePairedDelimiter{\ceil}{\lceil}{\rceil}

\DeclarePairedDelimiter{\set}{\lbrace}{\rbrace}
\DeclarePairedDelimiter{\Span}{\langle}{\rangle}

\newcommand*{\bE}{\mathbb{E}}
\newcommand*{\bF}{\mathbb{F}}

\newcommand*{\bN}{\mathbb{N}}
\newcommand*{\bP}{\mathbb{P}}

\newcommand*{\bZ}{\mathbb{Z}}

\newcommand*{\cC}{\mathcal{C}}

\newcommand*{\cO}{\mathcal{O}}
\newcommand*{\cP}{\mathcal{P}}

\DeclareMathOperator{\Bin}{Bin}

\DeclareMathOperator{\GL}{GL}
\DeclareMathOperator{\PG}{PG}
\DeclareMathOperator{\PGL}{PGL}
\DeclareMathOperator{\ST}{ST}
\DeclareMathOperator{\Sz}{Sz}

\newcommand{\defn}[1]{{\color{Maroon}\ifmmode#1\else\emph{#1}\fi}}
\newcommand*{\eps}{\varepsilon}

\title{Disproof of the dominating Hadwiger conjecture}
\author{Freddie Illingworth\footnotemark[1]\qquad Raphael Steiner\footnotemark[2]}
\date{\today}

\allowdisplaybreaks[1]

\begin{document}

\maketitle

\renewcommand{\thefootnote}{\fnsymbol{footnote}} 

\newcommand{\hdom}{h_{\mathrm{dom}}}
\newcommand{\nudom}{\nu_{\mathrm{dom}}}

\footnotetext[1]{Department of Mathematics, University College London, UK (\textsf{\href{mailto:f.illingworth@ucl.ac.uk}{f.illingworth@ucl.ac.uk}}).}
\footnotetext[2]{Department of Mathematics, ETH Z\"{u}rich, Switzerland (\textsf{\href{mailto:raphaelmario.steiner@math.ethz.ch}{raphaelmario.steiner@math.ethz.ch}}). Research supported by the SNSF Ambizione Grant No. 216071.}

\renewcommand{\thefootnote}{\arabic{footnote}} 

\begin{abstract}
Hadwiger's conjecture (1943) states that every graph $G$ with chromatic number at least $t$ contains a \emph{$K_t$-model}: a collection of $t$ vertex-disjoint connected subgraphs $T_1,\dots,T_t$ such that for all $1\le i<j\le t$ some vertex in $T_j$ has a neighbour in $T_i$. Replacing \emph{some} in this definition by \emph{every} gives rise to the significantly stronger notion of a \emph{dominating $K_t$-model} introduced by Illingworth and Wood (2024). They raised the question whether every graph of chromatic number at least~$t$ contains a dominating $K_t$-model. This statement is a significant strengthening of Hadwiger's conjecture and has come to be known as the \emph{dominating Hadwiger conjecture}. 

We provide our own exposition of a disproof of this conjecture found by ChatGPT 6 Astra Ultra. The construction is the complement of a pseudorandom triangle-free graph that is obtained by randomly subsampling a block geometric graph based on the Suzuki--Tits ovoid.
\end{abstract}

\section{Introduction}

Hadwiger's conjecture~\cite{MR12237} from 1943 is amongst the oldest, best known, and most fundamental problems in graph theory. It claims a deep connection between graph minors and the decomposability of graphs into simple pieces.

\begin{conj}[Hadwiger 1943]
For every positive integer $t$, every graph $G$ with $\chi(G)\ge t$ contains $K_t$ as a minor.
\end{conj}

Hadwiger's conjecture simultaneously generalises many important graph colouring results and has attracted tremendous amounts of research effort. Many partial results, both positive and negative, have been shown; see the survey articles~\cite{MR1411244,Seymour16,MR4680416} on the conjecture for a good overview. The precise version of Hadwiger's conjecture is known only for $t\le 6$: the cases $t\le 3$ are trivial, the case $t=4$ was proved by Hadwiger~\cite{MR12237} and independently by Dirac~\cite{MR45371} in 1952, the case $t=5$ was shown to be equivalent to the four colour problem by Wagner~\cite{MR1513158} in 1937 and thus solved upon its proof by Appel and Haken~\cite{MR543792,MR543793} in 1977, and the last known case $t=6$ was proved by Robertson, Seymour and Thomas~\cite{MR1238823} in a tour de force in 1993, by reduction to the four colour theorem. All the cases $t\ge 7$ of the conjecture remain open. For large $t$, the best upper bound known for the chromatic number of $K_t$-minor-free graphs is $\cO(t\log\log\log t)$ due to Liu and Luo~\cite{liu2026halfwayhadwigersconjecture}, building upon previous breakthrough work of Delcourt and Postle~\cite{MR4868948} and Norin, Postle and Song~\cite{MR4576840}. Despite the difficulty of the conjecture, several strengthenings have been proposed and studied over the years: a conjecture attributed to Haj\'{o}s stated that every graph with chromatic number at least $t$ in fact contains a \emph{subdivision} of $K_t$, which is a stronger property than containing $K_t$ as a minor. This stronger conjecture was refuted by Catlin~\cite{MR532593} in 1979 for all $t\ge 7$ and remains open for $t\in \set{5,6}$. Another strengthening of the conjecture was proposed by Borowiecki~\cite{Borowiecki1993} in 1993, who asked whether every graph with \emph{list chromatic number} at least $t$ contains $K_t$ as a minor. Counterexamples for $t=5$ were found by Voigt~\cite{MR1235909}, and subsequent work demonstrated a stronger negative answer in an asymptotic sense~\cite{MR2861411,MR4496022}. 
Perhaps the longest-lived and most famous strengthening of the conjecture was the \emph{odd Hadwiger conjecture} of Gerards and Seymour from 1993 (cf.~\cite{MR1304254}), which claimed that under the same assumption as in Hadwiger's conjecture, one can guarantee a $K_t$-minor with additional parity restrictions, called an \emph{odd} $K_t$-minor. Unfortunately, this strengthening has recently been shown to be false; K\"{u}hn, Sauermann, Steiner, and Wigderson~\cite{kühn2025disproofoddhadwigerconjecture} proved that there exist graphs with chromatic number at least $(3/2-o(1))t$ that do not have an odd $K_t$-minor. 

In this work, we extend this list of failed strengthenings by presenting a disproof of the so-called \emph{dominating Hadwiger conjecture}, raised as an open problem by Illingworth and Wood~\cite{MR4971916}. To state this strengthening, we need to introduce the notion of (dominating) $K_t$-models. Given a positive integer $t$ and a graph $G$, a \defn{$K_t$-model} in $G$ consists of a tuple of pairwise vertex-disjoint connected subgraphs $(T_1,\dots,T_t)$ with the property that for all $1\le i<j\le t$, some vertex in $T_j$ has a neighbour in $T_i$. It is well-known and easy to see that a graph contains $K_t$ as a minor if and only if it contains a $K_t$-model. A \defn{dominating $K_t$-model} makes the stronger requirement that, for all $1\le i<j\le t$, \emph{every} vertex in $T_j$ has a neighbour in $T_i$, that is, $T_i$ dominates all subsequent subgraphs $T_{i+1},\ldots, T_t$ for every $i$. The class of graphs excluding dominating $K_t$-models is significantly richer than the class of $K_t$-minor-free graphs--for example, for $t\ge5$, all graphs of maximum degree $t-2$ are contained in the former, but may have arbitrarily large clique minors. Thus, the following statement, asked as an open problem by Illingworth and Wood~\cite{MR4971916} and known as the dominating Hadwiger conjecture, is significantly stronger than Hadwiger's original conjecture.
\begin{conj}[Dominating Hadwiger]\label{con:dom}
    For every positive integer $t$, every graph $G$ with $\chi(G)\ge t$ contains a dominating $K_t$-model.   
\end{conj}

\Cref{con:dom} has been confirmed in the cases $t\le 4$ by Illingworth and Wood, and more recently, the case $t=5$ was proved by Gir\~{a}o et al.~\cite{girão2026dominating4colourtheorem}, providing a significant strengthening of the four colour theorem. The cases $t\ge 6$ of the conjecture remained open. The best asymptotic bound known for the problem was recently established by Gir\~{a}o, Norin, Tamitegama, and Tan~\cite{girão2026relaxationsdominatinghadwigersconjecture}, who proved that, for some absolute constant $C>0$, every graph of average degree at least $Ct\log^2 t$ and thus also every graph of chromatic number at least $Ct\log^2 t+1$ contains a dominating $K_t$-model. The conjecture has also been confirmed for several special classes of graphs~\cite{scully2025dominatinghadwigersconjecturegraphs,song2025dominatinghadwigersconjectureholds}. 

In this paper, we present an exposition of a disproof of \cref{con:dom} which was found by ChatGPT 6 Astra Ultra after being repeatedly encouraged by the second author to try and disprove the conjecture for complements of suitably constructed triangle-free graphs, to focus on avoiding large ``dominating connected matchings'' (i.e., dominating models where all connected subgraphs have two vertices), and to draw inspiration from the disproof of the odd Hadwiger conjecture~\cite{kühn2025disproofoddhadwigerconjecture}.

\begin{thm}\label{thm:main}
    There exists an absolute constant $\delta > 0$ such that for every sufficiently large odd integer $m$ there exists a graph $G$ on $N = 16^m$ vertices with $\alpha(G) \le 2$ and which does not contain a dominating $K_{\ceil{(1/2 - \delta)N}}$-model. 
\end{thm}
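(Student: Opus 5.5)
The plan is to take $G = \overline{H}$, where $H$ is a triangle-free graph on $N = 16^m$ vertices, so that $\alpha(G) \le 2$ holds automatically. The task then becomes ruling out large dominating models in $G$.

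\textbf{Reduction to matchings.} Since $\alpha(G)\le 2$, every dominating $K_t$-model $(T_1,\dots,T_t)$ of $G$ satisfies $\sum_i |T_i| \le N$. Hence if $t \ge (1/2-\delta)N$, all but $O(\delta N)$ of the branch sets are singletons or pairs, and at least $(1/2 - O(\delta))N$ of them are pairs. Singletons in $G$ dominating one another form a clique in $G$, which is an independent set in $H$. So if $\alpha(H)$ is small (pseudorandomness), then few branch sets are singletons, and we obtain a ``dominating connected matching'' in $G$ of size $(1/2-O(\delta))N$: pairs $\{x_i,y_i\}$ with $x_iy_i \notin E(H)$ such that, for $i<j$, each of $x_j,y_j$ has a $G$-neighbour in $\{x_i,y_i\}$. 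In $H$-language, no vertex of a later pair is $H$-adjacent to \emph{both} vertices of an earlier pair. Equivalently, for $i<j$, the common $H$-neighbourhood $N_H(x_i)\cap N_H(y_i)$ contains no vertex of pair $j$.

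\textbf{Construction of $H$.} I would build $H$ from the Suzuki--Tits ovoid $\mathcal{O}\subset \PG(3,q)$ with $q = 2^{m}$ and $m$ odd, where $|\mathcal{O}| = q^2+1$ and no three points are collinear. Vertices are points of an affine/vector space of size $q^4=16^m$. The block geometric graph joins vertices in blocks determined by ovoid directions, i.e. $u\sim v$ if $u-v$ lies in a prescribed set of directions arising from $\mathcal{O}$. The no-three-collinear property (together with characteristic $2$) makes the base graph triangle-free, since a triangle would give three ovoid-related directions summing to $0$. I would then randomly subsample the edges (keep blocks or edges independently with suitable probability $p$) so that $H$ stays triangle-free, has $\alpha(H)=o(N)$, and has the key property: for every non-adjacent pair $x,y$ with large enough codegree, the common neighbourhood $N_H(x)\cap N_H(y)$ is spread out.

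\textbf{Counting argument.} The core step, and the main obstacle, is to show that no ordering of $\Omega(N)$ disjoint non-edges can have each later pair avoid every earlier pair's common neighbourhood in the above sense. My first approach is a union bound over candidate matchings in the random subsample, in the spirit of the odd Hadwiger disproof. The ovoid geometry controls the codegree structure: two points have common neighbourhood of a predictable size, about $p^2 q^{\cdot}$, and distinct pairs' common neighbourhoods have small intersections. This makes the ``forbidden'' sets of the earlier pairs cover a constant fraction of the remaining vertices, which forces the later pairs to avoid a positive-density set. One then shows the number of ordered matchings that achieve this is smaller than the inverse probability of the required avoidance events. The difficulty is that the union bound over $N^{\Theta(N)}$ matchings must be beaten. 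To overcome this, I would exploit the fact that the avoidance events involve $\Theta(N^2)$ vertex–pair incidences with $\Theta(N^2 \cdot \text{codegree}/N)$ constraints, and choose $p$ so that the resulting exponent $\exp(-c p^2 N^2 \cdot)$ dominates $N^{N}$. Finally, $\delta$ is extracted from the constant density loss in this count.
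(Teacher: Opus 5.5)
Your reduction to dominating connected matchings agrees with the paper: few singletons because $\alpha(H)$ is small, and few branch sets of size $\ge 3$ by counting. Both the construction and the core counting step, however, have genuine gaps.

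\textbf{The construction.} The graph you start from is not triangle-free. Joining $u,v$ whenever $u+v\in\bF_q^\times\cdot D$ puts a $K_q$ on every line in a special direction. The no-three-collinear property only shows that every triangle lies inside one such line. Keeping whole blocks keeps these $K_q$'s, and independent edge sampling is the wrong sparsification. By \cref{lem:suzukitits}\ref{part:adj-common}, a collinear pair has all its common neighbours on its own line. After sampling, a collinear non-edge therefore has only about $p^2q$ common neighbours, often none, and such pairs impose essentially no domination constraint.

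If you meant the Cayley graph with connection set $D$ itself, it is triangle-free but far too sparse. For example, with the normalisation in which every element of $D$ has first coordinate $0$ or $1$, every sum of two elements of $D$ has first coordinate $0$ or $1$. Pairing each $u$ with $u+\lambda e_1$ for a fixed $\lambda\notin\{0,1\}$ then gives a perfect matching into pairs with no common neighbour, i.e.\ a dominating $K_{N/2}$-model in the complement.

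The missing idea is to replace each line-clique by a random complete \emph{bipartite} graph, i.e.\ a random $2$-colouring of each line. This is triangle-free. For a suitable outcome it keeps at least $q^2/5$ common neighbours for non-collinear pairs and gives $\alpha<2q^3$. Crucially, it forces every collinear non-edge to lie on one side of its line, so its common neighbourhood is the entire opposite side, of size at least $q/3$.

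\textbf{The counting step.} The union bound over $N^{\Theta(N)}$ ordered matchings is not actually carried out; the exponent you write does not come from any computation. It also does not address the real obstruction. Even in the bipartite construction a collinear pair has only $\Theta(q)$ common neighbours, all on one line. An initial segment consisting mostly of collinear pairs, each on a line whose opposite side already lies in $R$ or in earlier pairs, therefore creates no space problem, and no count of common neighbourhoods excludes it.

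The paper excludes it by the polynomial method (\cref{lem:line-filling}). Let $k=\lceil\eps N\rceil$, let $J\subseteq[k]$ index the non-collinear pairs, and let $d=\lceil\eps q\rceil$. The sets $Y_0=R\cup\{u_j,v_j:j\in J\}\subseteq Y_1\subseteq\dots\subseteq Y_k$ grow only by points of lines already containing more than $d$ points of the current set. If $|Y_0|<\binom{d+4}{4}$, a non-zero polynomial of degree at most $d$ vanishing on $Y_0$ vanishes on every such line and hence on $Y_k$. Schwartz--Zippel then gives $2k\le|Y_k|\le dq^3<2k$, a contradiction, so $|J|\ge(\eps^4/48-5\delta)N$.

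Only then does the overlap argument you gesture at apply. The sets $\Gamma_j$ ($j\in J$), each of size at least $q^2/5$, all lie in $W=R\cup\{u_1,v_1,\dots,u_k,v_k\}$ with $|W|\le(10\delta+3\eps)N$. The triple bound $<2q$ and the quadruple bound $\le 2500$ outside an exceptional set $S_{xy}$ of fewer than $q^2$ vertices (\cref{lem:suzukitits}\ref{part:triple-common},\ref{part:quad-common}) give $\sum_{j,j'}|\Gamma_j\cap\Gamma_{j'}|\le 3000|J|^2$. These bounds need the Suzuki--Tits algebra and the Breuillard--Green--Tao root count. Cauchy--Schwarz then forces $|W|\ge N/75000$, a contradiction for small $\eps,\delta$.

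Without the line-filling step for collinear pairs and a proof of the quadruple codegree bound, your argument does not close.
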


Since the graph $G$ has chromatic number $\chi(G) \geq \abs{V(G)}/\alpha(G) \ge N/2$, this disproves the dominating Hadwiger conjecture. In fact, by adding universal vertices so as to adjust the order of the graph, it can easily be seen that \cref{thm:main} also implies that for some absolute constant $\varepsilon>0$ and every sufficiently large $t\in\bN$, there exists a graph with chromatic number at least $(1+\varepsilon)t$ and no dominating $K_t$-model.\footnote{To see this, set $\varepsilon \coloneqq \delta/130$ and let $t$ be any given sufficiently large integer. Let $m$ be the largest odd number such that $N \coloneqq 16^m$ satisfies $\ceil{(1/2-\delta)N} \le t$. By \cref{thm:main}, there exists a graph $G$ on $N$ vertices with no dominating $K_{\ceil{(1/2-\delta)N}}$-model and chromatic number at least $N/2$. Let $G'$ be obtained from $G$ by adding $t-\ceil{(1/2-\delta)N}$ universal and pairwise adjacent vertices. Then $G'$ has no dominating $K_t$-model and chromatic number at least $N/2+t-\ceil{(1/2-\delta)N} \ge t+\delta N-1$. By the choice of $m$ we have $t < \ceil{(1/2-\delta)(256 N)} \le 128 N$ and hence $\chi(G')\ge (1+\varepsilon)t$ for $\varepsilon \coloneqq \frac{\delta}{130}$ and sufficiently large $t$.} It does leave open the possibility that, for some absolute constant $C > 0$, every graph with chromatic number at least $Ct$ contains a dominating $K_t$-model. If true, this would constitute a significant strengthening of the \emph{linear} Hadwiger conjecture.

\paragraph{Randomly subsampled block geometric graphs.} The graph in \cref{thm:main} is the complement of a particular pseudorandom triangle-free graph which is obtained from a block geometric graph by random subsampling. Randomly subsampled block geometric graphs were first used by Brown and R\"{o}dl~\cite{BR} and have since been used to construct pseudorandom graphs~\cite{Conlon} and minimal Ramsey graphs~\cite{FGLPS},
to prove bounds for Ramsey numbers~\cite{MR4713025,MV,bradac26} and the Erd\H{o}s--Rogers function~\cite{DR11,DRR14}, and to study line percolation~\cite{BBLN}. The underlying Suzuki--Tits geometry has also been used for constructing expanders~\cite{BGT}, near-MDS codes and complete caps~\cite{CCMP}.

A typical construction works roughly as follows. 
First pick a collection of lines in a finite geometry (e.g.\ in projective space $\PG(d, q)$ or vector space $\bF_q^d$). 
Next, define a graph whose vertex-set is the finite geometry and where two points are adjacent if they are on a line from the collection. 
Equivalently, a complete graph is placed on each line in the collection.
Although this graph is quite block-like (with large cliques), the particular properties of the chosen geometry may endow it with pseudorandom properties.
Finally, each complete graph is replaced by a suitably chosen random subgraph.

In this paper, the finite geometry will be $\bF_q^4$ with the collection of lines based on the \emph{Suzuki--Tits ovoid}, introduced by Tits~\cite{tits}, and each complete graph is replaced by a random complete bipartite subgraph.
The analysis which bounds the size of the largest dominating model in the complement uses the polynomial method in an elegant way (\cref{lem:line-filling}).

\paragraph{Organisation.} In \cref{sec:Suzuki-Tits}, we define the \emph{Suzuki--Tits graph} $\ST_q$, the block geometric graph based on the Suzuki--Tits ovoid, and list some of its properties (\cref{lem:suzukitits}).
In \cref{sec:randomsubgraph} we randomly sparsify the Suzuki--Tits graph to obtain a pseudorandom triangle-free subgraph $H_0$ (\cref{lem:random}).
In \cref{sec:complement} we take the complement of $H_0$ and bound from above the size of the largest dominating $K_t$-model present, which completes the proof of \cref{thm:main}.
We prove \cref{lem:suzukitits}, which gives properties of the Suzuki--Tits graph, in \cref{app:Suzuki-Tits}.

\paragraph*{High-level proof overview.} At a very high level, the proof works as follows. First, the algebraic block geometric graph (Suzuki--Tits graph) $\ST_q$ on vertex-set $\bF_q^4$ is defined by placing cliques on top of lines in certain directions. Several properties of $\ST_q$ are proved, in particular about the sizes of common neighbourhoods of pairs, triples, and quadruples of vertices (\cref{lem:suzukitits}). This involves some gnarly algebra and a root counting lemma of Breuillard, Green, and Tao (\cref{lem:poly}).
Next the graph $\ST_q$ is randomly subsampled by replacing the complete graphs on the lines by random complete bipartite subgraphs. This ensures the resulting subgraph is triangle-free. Standard probabilistic arguments then show that there exists an outcome $H_0$ for this randomly subsampled graph for which the complete bipartite subgraphs are roughly balanced and such that $H_0$ has sublinear independence number (of order $q^3$, while the number of vertices of the graph is $q^4$). Our counterexample $G$ for \cref{thm:main} is then the complement of this graph $H_0$. Triangle-freeness of $H_0$ immediately guarantees that $\alpha(G)\le 2$, so it remains to show that the largest number $t$, for which $G$ contains a dominating $K_t$-model, is bounded below $\abs{V(G)}/2$ by a constant factor. 

The first simplification here is to note that, since $\omega(G) = \alpha(H_0)$ is sublinear in $\abs{V(G)}$, in any dominating model the number of singleton connected subgraphs must be negligible. A simple counting argument also shows that all but a small proportion of the connected subgraphs in the dominating model have exactly two vertices, since too many connected subgraphs of order more than two would also bound $t$ below $\abs{V(G)}/2$. Thus, continuing the proof, one may restrict to a dominating $K_t$-model where all connected subgraphs are $K_2$'s and $t$ is still close to $\abs{V(G)}/2$. We consider this a ``dominating connected matching'' in $G$. In the complement graph $H_0$, this corresponds to a set of $t$ non-adjacent pairs of vertices (which we will call the \emph{anti-matching}), and the dominating property yields that all the common neighbours of these pairs have to lie either outside the whole anti-matching or among previous pairs. Since $t$ is very close to $\abs{V(H_0)}/2$, not many vertices lie outside of the anti-matching. 

Next we consider an initial segment of this anti-matching constituting a small but positive proportion of its size.
We distinguish two types of non-adjacent pairs in this initial segment: (a) those which were edges in $\ST_q$ but got removed by subsampling, and (b) those which were already non-edges in $\ST_q$. We then show, using the common neighbourhood property of the anti-matching and the algebraic structure of the Suzuki--Tits graph, that a too small number of non-adjacent pairs of type (b) in the initial segment would yield an algebraically impossible scenario, where a very small set of vertices of the Suzuki--Tits graph $\ST_q$ can infect a much larger final set of vertices by a simple spreading process that fully infects any line that is already sufficiently infected. The impossibility of the latter scenario is proved using an adaptation of Dvir's~\cite{Dvir} elegant proof of the finite-field Kakeya conjecture (see \cref{lem:line-filling}). Thus, a substantial number of pairs of type (b) must be contained in the initial segment of the anti-matching.

The key point is then that each common neighbourhood of these non-edges of $\ST_q$ has substantial size (of order $q^2$) and this property is also preserved after the random subsampling yielding $H_0$. All of these large common neighbourhoods in $H_0$ lie in the union of the small remainder of vertices outside of the anti-matching and the small initial segment of the anti-matching we fixed. This leads to a space issue, the only way out of which would be if the common neighbourhoods of distinct pairs of type (b) in $H_0$ overlapped a lot. However, as is shown in \cref{lem:suzukitits}, a typical quadruple's common neighbourhood is only of constant size in $\ST_q$ (and thus also in $H_0$). 
The exceptional quadruples are sufficiently few and their common neighbourhoods sufficiently small that the pairwise overlaps are bounded on average.
This implies that the overlaps of common neighbourhoods are too small to resolve the aforementioned space issue and yields the final contradiction, showing that a dominating $K_t$-model in $G$ with $t$ very close to $\abs{V(G)}/2$ cannot exist. 

\paragraph*{AI Declaration.} As previously explained, the presented proof was found by GPT 6 Astra Ultra. The authors' contribution is to present this proof here for a human audience. The paper has been fully written by the human authors, with no AI input for writing, except a final proofreading and typo-check using ChatGPT 6 Pro. The authors take full responsibility for the exposition and correctness of this manuscript.

\paragraph*{Acknowledgements.} We would like to thank Sergey Norin for useful comments on this proof.

\section{The Suzuki--Tits graph}\label{sec:Suzuki-Tits}

The construction starts with a very structured pseudorandom graph $\ST_q$ where $q = 2^{2h + 1}$ for some integer $h \geq 1$.
The vertex set of this graph is the 4-dimensional vector space $\bF_q^4$. 
We will define a set of \defn{special directions}: a set $D$ of $q^2 + 1$ vectors in $\bF_q^4$ such that any three are linearly independent. These will be given by the Suzuki--Tits ovoid (we defer the definitions of $D$ and the Suzuki--Tits ovoid to \cref{app:Suzuki-Tits}).
Finally, two distinct points in $\bF_q^4$ are adjacent if and only if their difference (or equivalently their sum since $\bF_q$ has characteristic 2) is a multiple of a special direction.
In particular, each line in $\bF_q^4$ that is parallel to a special direction induces a clique of order $q$ in $\ST_q$. 
The following lemma gives the properties of the Suzuki--Tits graph that we will use.

\begin{lem}[Suzuki--Tits graph]\label{lem:suzukitits}
    For each $q = 2^{2h + 1}$ \textup{(}where $h \geq 1$\textup{)} there is a graph $\ST_q$ on vertex-set $\bF_q^4$ that is vertex-transitive, arc-transitive, co-arc-transitive\footnote{A graph is \defn{arc-transitive} if its automorphism group acts transitively on ordered pairs of adjacent vertices and \defn{co-arc-transitive} if its automorphism group acts transitively on ordered pairs of non-adjacent distinct vertices.}, and satisfies the following properties. The edge-set of $\ST_q$ is the union of a collection $\cC$ of edge-disjoint $K_q$'s, where\textup{:}
    \begin{enumerate}[label = \textup{(\alph{*})}]
        \item\label{part:line} $\abs{\cC} = q^5 + q^3$ and each clique of $\cC$ is a line in $\bF_q^4$.
        \item\label{part:vertex} Every vertex is in $q^2 + 1$ cliques of $\cC$ and so has degree $(q^2 + 1)(q - 1)$.
        \item\label{part:triangle} Every triangle is contained inside some clique of $\cC$.
        \item\label{part:adj-common} The common neighbourhood of each pair of adjacent vertices is a $K_{q - 2}$.
        \item\label{part:non-adj-common} The common neighbourhood of each pair of non-adjacent vertices has size $q(q - 1)$.
        \item\label{part:triple-common} The common neighbourhood of each triple of distinct vertices has size less than $2q$.
        \item\label{part:quad-common} For every pair of non-adjacent vertices $\set{x, y}$, there is a set $S_{xy}$ of fewer than $q^2$ vertices such that the common neighbourhood of $4$ pairwise distinct vertices $\set{x, y, z, w}$ has size at most $2500$ unless $z, w \in S_{xy}$.
    \end{enumerate}
\end{lem}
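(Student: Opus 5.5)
The plan is to realise everything inside $\bF_q^4$ with $\sigma = 2^{h+1}$, so that $x^{\sigma^2} = x^2$. Take the Suzuki--Tits ovoid $\cO = \set{(1,s,t,st + s^{\sigma+2} + t^{\sigma})} \cup \set{(0,0,0,1)} \subset \PG(3,q)$, and let $D$ be one chosen vector representative for each of its $q^2+1$ points. Since $\cO$ is an ovoid (no three points collinear), any three vectors of $D$ are linearly independent. The cliques of $\cC$ are the affine lines $a + \bF_q d$ with $d \in D$. There are $q^4(q^2+1)/q = q^5+q^3$ of them. They are edge-disjoint, because a difference vector determines its projective point and hence at most one $d \in D$. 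This gives \ref{part:line}, and \ref{part:vertex} is immediate.

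For the transitivity statements I will use the group generated by three things: translations, scalar matrices, and the linear lift of $\Sz(q)$ stabilising $\cO$. I will quote the classical facts that $\Sz(q)$ is $2$-transitive on $\cO$ and transitive on the $q(q^2+1)$ points of $\PG(3,q)\setminus\cO$. Translations give vertex-transitivity. The remaining statements reduce to transitivity of the stabiliser of $0$ on nonzero vectors in special directions (arc-transitivity) and on nonzero vectors outside special directions (co-arc-transitivity). Transitivity on projective points comes from $\Sz(q)$, and scalars fix the representative.

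For \ref{part:triangle}, suppose $y-x = \lambda d$, $z-x = \mu d'$ and $z-y = \nu d''$. Then $d,d',d''$ are linearly dependent, so two of them coincide, and then all three lie on one line. Part \ref{part:adj-common} follows at once, since common neighbours of adjacent $x,y$ form triangles with $x,y$, and hence lie on their line. For \ref{part:non-adj-common}, put $v = y-x$, so $[v] \notin \cO$. Each common neighbour $z$ gives a decomposition $v = \lambda d - \mu d'$ with $d \ne d'$ and $\lambda,\mu \ne 0$. This means $[v]$ lies on the secant $\Span{d,d'}$. Conversely, each ordered pair of points of a secant through $[v]$ gives exactly one $z$. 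The standard ovoid count applies: through a point off $\cO$ there pass $q+1$ tangents (for $q$ even they form the pencil of the polar plane) and hence $s$ secants, where $(q+1) + 2s = q^2+1$. So $s = (q^2-q)/2$, and the number of common neighbours is $2s = q(q-1)$.

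Parts \ref{part:triple-common} and \ref{part:quad-common} are the genuine obstacle, and I would attack them algebraically. If two of the vertices are adjacent, \ref{part:adj-common} already bounds the common neighbourhood by $q-2$. So assume $x = 0$ and the others are pairwise non-adjacent. The common neighbourhood of $0$ and $v$ is parametrised by the secants through $[v]$, which I write explicitly as the points $w = \lambda(1,s,t,f(s,t))$ satisfying a polynomial condition expressing that $w - v$ is a special direction. Adding a third vertex $u$ imposes one more polynomial equation in the same two parameters $(s,t)$. I then want to show that the two curves have no common component, and bound the number of intersections via the root-counting lemma of Breuillard--Green--Tao (\cref{lem:poly}). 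The degrees involve $\sigma \approx \sqrt{2q}$, which is what should give the bound $2q$.

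For \ref{part:quad-common} a fourth equation is added, so generically we get a zero-dimensional system. A Bézout-type bound on the reduced system should give an absolute constant, and $2500$ leaves ample room. The exceptional set $S_{xy}$ consists of those $z$ for which the curve coming from $z$ shares a component with the curve from $0,v$. I would show that these form fewer than $q^2$ points by classifying the degenerate components, which I expect to be lines or conics in the secant parametrisation. The hardest part will be controlling the non-polynomial map $t \mapsto t^\sigma$ well enough to apply \cref{lem:poly} uniformly.
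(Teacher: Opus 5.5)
\textbf{There is a genuine gap: parts (f) and (g) are not proved.} Your set-up, the transitivity argument and parts (a)--(d) agree with the paper. Your proof of (e), counting secants through the external point $[y-x]$ ($q+1$ tangents, $(q^2-q)/2$ secants, two common neighbours per secant), is correct and is a more conceptual alternative to the paper's explicit parametrisation of $\Gamma(0,e_3)$. But (f) and (g) carry all the difficulty, and you only sketch them, on heuristics that would not work.

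\textbf{Part (f).} The common neighbourhood of the pair has $q(q-1)$ points, so it is two-dimensional. A third vertex therefore cuts out a \emph{single} curve, so there is no pair of curves whose common components matter. That curve is defined by an equation involving $s^\sigma$ and $t^\sigma$ with $\sigma=\sqrt{2q}$, so a degree count gives roughly $\sigma q\approx q^{3/2}$ points, not $2q$. Also, \cref{lem:poly} is a one-variable statement and does not count points on a plane curve. The bound $2q$ comes from structure you have not identified:
\begin{itemize}
\item Use co-arc-transitivity to move the non-adjacent pair to $0,e_3$, and parametrise $\Gamma(0,e_3)$ as $p(s,t)=(s^{\sigma+1},s^\sigma,t,(1+t+t^\sigma)/s)$.
\item Adjacency to a third vertex $v=(x,y,z,w)$ then becomes $A_v(s,s^\sigma)\,t^\sigma+B_v(s,s^\sigma)\,t+C_v(s,s^\sigma)=0$, which is $\sigma$-semilinear in $t$.
\item If $v\notin\Pi=\Span{e_3,e_4}$, then for each fixed $s$ there are at most two solutions $t$. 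The additive map $t\mapsto At^\sigma+Bt$ has kernel of size at most $2$ because $t\mapsto t^{\sigma-1}$ is bijective, and the single $s$ with $s^{\sigma+1}=x$ is handled directly.
\item If $v\in\Pi$, then $A_v=B_v=0$ and the equation collapses to $ws+z+z^\sigma=0$, which fixes $s$.
\end{itemize}
This dichotomy is what yields $\max\set{q,2(q-1)}<2q$.

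\textbf{Part (g).} B\'ezout is likewise unavailable. If you treat $s^\sigma,t^\sigma$ as new variables, the relations defining them have degree $\sigma$, and you get bounds of order $\sigma^2=2q$, not a constant. You must first eliminate $t$:
\begin{itemize}
\item The two semilinear equations combine into $\Delta t=E$ and $\Delta t^\sigma=F$.
\item The Tits endomorphism $P\mapsto P^\ast$, which satisfies $P^\ast(s,s^\sigma)=P(s,s^\sigma)^\sigma$, turns these into the single equation $G(s,s^\sigma)=0$, where $G=E^\ast\Delta+F\Delta^\ast$ has degree at most $25$.
\item Only now does \cref{lem:poly} bound the number of $s$ by a constant.
\end{itemize}
The real content is then proving $G\not\equiv0$ (or $F\not\equiv0$ if $\Delta\equiv0$) whenever both extra vertices lie off $\Pi$. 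The paper shows that otherwise $f=E/\Delta$ satisfies $Af^\ast+Bf+C=0$ and is regular at $(0,0)$, so $a=f(0,0)$ solves $a^\sigma+a+1=0$ in $\bF_q$, which is impossible. Your ``no common component'' and ``degenerate components are lines or conics'' are exactly this non-degeneracy, asserted rather than proved.

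Your description of $S_{xy}$ also has the wrong shape. Badness is a property of the pair of extra vertices, not of a single one. You must show that every bad pair lies inside one fixed vertex set of size less than $q^2$. Here that set is $\Pi\setminus\set{0,e_3}$, and it cannot be avoided. Take two distinct vertices $(0,0,z,w)$ and $(0,0,z',w')$ with $(z+z^\sigma)/w=(z'+z'^\sigma)/w'=s_0\neq0$. Together with $0$ and $e_3$, they have all $q$ points $p(s_0,t)$, $t\in\bF_q$, as common neighbours.
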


\begin{rmk}
    Note that properties~\ref{part:vertex}, \ref{part:non-adj-common}, \ref{part:triple-common}, \ref{part:quad-common} suggest $\ST_q$ is random-like with edge density $\sim 1/q$.
    Furthermore, properties~\ref{part:vertex}, \ref{part:adj-common}, and \ref{part:non-adj-common} imply that $\ST_q$ is a strongly regular graph. The spectrum of its adjacency matrix is therefore determined: it has eigenvalues $(q^2 + 1)(q - 1)$, $q - 1$, $-(q^2 - q + 1)$ with multiplicities $1$, $q(q - 1)(q^2 + 1)$, $(q - 1)(q^2 + 1)$, respectively. Hence $\ST_q$ is an $(n, d, \lambda)$ graph for $n = q^4$, $d \sim q^3$, and $\lambda \sim q^2$.
\end{rmk}

The proof of \cref{lem:suzukitits} involves some reasonably dense algebra and counting roots of polynomials. We defer this to \cref{app:Suzuki-Tits}. 
There is one final property of the Suzuki--Tits graph that is needed for bounding the size of a dominating $K_t$-model in our final graph and whose proof is particularly pretty. Intuitively, the statement means that a natural spreading process across the Suzuki--Tits graph, which starts from some small initial set of infected vertices, cannot infect too many vertices overall. The rule is that at any time step, a line corresponding to a clique in the Suzuki--Tits graph can be fully infected once a sufficient number of its points has already been infected previously.

\begin{lem}\label{lem:line-filling}
    Let $d$ be a positive integer and let $Y_0,Y_1,\ldots,Y_k$ for some $k\in\bN$ be any sequence of subsets of $V(\ST_q)$ such that for each $i\in[k]$, we have $Y_i=Y_{i-1}$ or $Y_i\subseteq Y_{i-1}\cup V(C_i)$ for some $C_i\in\cC$ with $\abs{V(C_i)\cap Y_{i-1}}>d$. Then $|Y_0|<\binom{d+4}{4}$ implies $|Y_k|\le dq^3$. 
\end{lem}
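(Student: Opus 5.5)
Since $\abs{Y_0} < \binom{d+4}{4}$, which is the dimension of the space of polynomials of degree at most $d$ in $4$ variables over $\bF_q$, there is a nonzero polynomial $P \in \bF_q[x_1,\dots,x_4]$ of degree at most $d$ vanishing on all of $Y_0$. This is just linear algebra: we impose $\abs{Y_0}$ linear conditions on the coefficient vector.

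We then show by induction on $i$ that $P$ vanishes on $Y_i$ for every $i$. If $Y_i = Y_{i-1}$ there is nothing to prove. Otherwise $Y_i \subseteq Y_{i-1} \cup V(C_i)$, where $C_i$ is a clique of $\cC$. By \cref{lem:suzukitits}\ref{part:line}, $V(C_i)$ is a line $\set{a + \lambda v : \lambda \in \bF_q}$. The restriction $\lambda \mapsto P(a + \lambda v)$ is a univariate polynomial of degree at most $d$. By the inductive hypothesis it vanishes on the more than $d$ values of $\lambda$ corresponding to $V(C_i) \cap Y_{i-1}$, so it is identically zero as a polynomial in $\lambda$. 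Hence $P$ vanishes on all of $V(C_i)$, and therefore on $Y_i$.

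So $Y_k$ is contained in the zero set of a nonzero polynomial of degree at most $d$ over $\bF_q^4$. By the Schwartz--Zippel lemma (or the standard bound on the number of zeros of a nonzero degree-$d$ polynomial in $n$ variables over $\bF_q$), this zero set has size at most $d q^{n-1} = d q^3$. This gives $\abs{Y_k} \le d q^3$.

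One technical point needs care: Schwartz--Zippel requires $P$ to be nonzero as a function, not merely as a formal polynomial. We may assume $d < q$, since otherwise $d q^3 \ge q^4$ and the bound is trivial. A polynomial with all individual degrees at most $d < q$ that is nonzero as a formal polynomial is nonzero as a function, so the interpolating $P$ chosen in the space of polynomials of total degree at most $d$ is fine. Beyond this point there is no real obstacle; the argument is Dvir's polynomial method essentially verbatim.
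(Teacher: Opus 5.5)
Your proof is correct and matches the paper's argument step for step: a nonzero polynomial of total degree at most $d$ vanishing on $Y_0$ by dimension counting, propagation along each line $V(C_i)$ via the univariate root bound, and Schwartz--Zippel at the end. Your extra caveat is harmless but unnecessary, since Schwartz--Zippel already applies to any formally nonzero polynomial of total degree at most $d$ (and for $d \ge q$ the bound $dq^3 \ge q^4$ is trivial anyway).
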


This is proved with a polynomial argument which is essentially Dvir's famous and elegant solution of the finite field Kakeya problem~\cite{Dvir}.

\begin{proof}
    Let \defn{$\cP_d$} denote the linear space of all four-variable polynomials in $\bF_q[X_1, \dots, X_4]$ of total degree\footnote{The \defn{total degree} of the monomial $c X_1^{a_1} X_2^{a_2} X_3^{a_3} X_4^{a_4}$ (where $c \neq 0$) is $a_1 + a_2 + a_3 + a_4$ and the total degree of a polynomial is the maximum of the total degree of its constituent monomials.} at most $d$. 
    The dimension of $\cP_d$ is the number of four-variable monomials of total degree at most $d$ which is $\binom{d + 4}{4}$. 
    Thus $\dim(\cP_d) > \abs{Y_0}$. 
    Hence there is a non-zero polynomial $f \in \cP_d$ that vanishes on $Y_0$.

    We claim that $f$ in fact vanishes on every $Y_i, i=0,\dots,k$, and prove this by induction on $i$. The case $i = 0$ is already settled. Suppose that $f$ vanishes on $Y_{i-1}$ where $i \in [k]$. If $Y_i=Y_{i-1}$, then $f$ vanishes on $Y_i$. Otherwise $Y_i \subseteq Y_{i-1} \cup V(C_i)$ for some $C_i \in \cC$ with $\abs{V(C_i) \cap Y_{i-1}} > d$.
    Now, $V(C_i)$ is a line in $\bF_q^4$ by \cref{lem:suzukitits}\ref{part:line}. Restricting $f$ to this line gives a one-variable polynomial of degree at most $d$ that vanishes on $V(C_i) \cap Y_{i-1}$. 
    Since $\abs{V(C_i) \cap Y_{i-1}} > d$, this one-variable polynomial of degree at most $d$ has more than $d$ roots and so must be the zero polynomial. 
    Thus $f$ vanishes on the full line $V(C_i)$ and thus also on $Y_i$. This completes the induction step and so $f$ vanishes on the final set $Y_k$. 
    On the other hand, the Schwartz--Zippel lemma~\cite{Zippel79,Schwartz80} states that every non-zero $n$-variable polynomial of total degree at most $d$ over $\bF_q$ has at most $d q^{n - 1}$ roots and so $f$ has at most $d q^3$ roots. Every element of the final set $Y_k$ is a root of $f$, and hence it follows that $|Y_k|\le dq^3$, as desired.
\end{proof}

\section{Proof of \texorpdfstring{\cref{thm:main}}{main result}}
In this section, we present the proof of our main result, \cref{thm:main}, based on algebraic auxiliary results collected in the previous section.
\subsection{The random triangle-free graph}\label{sec:randomsubgraph}
In this section, we prepare the proof of the main theorem by first establishing the existence of a suitable triangle-free subgraph of the Suzuki--Tits graph satisfying several properties that shall be used in the proof of \cref{thm:main}. As previously mentioned, the existence of such a subgraph is established by analysing a suitable randomly subsampled graph of the Suzuki--Tits graph, and showing that it has the desired properties with positive probability provided $q$ is sufficiently large.
\begin{lem}\label{lem:random}
For all sufficiently large $h$ the following holds where $q = 2^{2h + 1}$.
There exists a triangle-free spanning subgraph $H_0$ of the Suzuki--Tits graph $\ST_q$ given by \cref{lem:suzukitits} such that\textup{:}
\begin{itemize}
    \item for every $C\in\cC$, $H_0\cap C$ is a complete bipartite graph with both bipartition classes of size at least $q/3$,
    \item every two vertices $u, v\in V(H_0)$ that do not belong to a common element of $\cC$ have at least $q^2/5$ common neighbours in $H_0$,
    \item $\alpha(H_0) < 2q^3$.
\end{itemize}
\end{lem}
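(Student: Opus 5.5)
The construction is random. For each clique $C\in\cC$ we independently choose a uniformly random partition $V(C)=A_C\cup B_C$, each vertex of the line placed on either side independently with probability $1/2$. Let $H_0$ be the union over $C\in\cC$ of the complete bipartite graphs between $A_C$ and $B_C$. This is a spanning subgraph of $\ST_q$ because the cliques of $\cC$ are edge-disjoint and cover $E(\ST_q)$.

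\textbf{Triangle-freeness.} By \cref{lem:suzukitits}\ref{part:triangle}, every triangle of $H_0\subseteq\ST_q$ lies inside a single clique $C$. But $H_0\cap C$ is bipartite, so it contains no triangle. This is deterministic.

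\textbf{First property.} For each $C$, $\abs{A_C}\sim\Bin(q,1/2)$. By Chernoff, $\bP(\abs{A_C}<q/3 \text{ or } \abs{B_C}<q/3)\le 2e^{-cq}$ for some constant $c>0$. A union bound over the $q^5+q^3$ cliques gives failure probability $o(1)$.

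\textbf{Second property.} Take $u,v$ not in a common clique, so they are non-adjacent in $\ST_q$. By \cref{lem:suzukitits}\ref{part:non-adj-common} they have $q(q-1)$ common neighbours $w$ in $\ST_q$. For each such $w$:
\begin{itemize}
\item $uw$ lies in a unique clique $C_{uw}$ and $vw$ lies in a unique clique $C_{vw}$;
\item $C_{uw}\ne C_{vw}$, since otherwise $u,v$ would share a clique;
\item $w$ is a common $H_0$-neighbour exactly when $u,w$ are separated in $C_{uw}$ and $v,w$ are separated in $C_{vw}$, which has probability $1/4$ by independence across the two cliques.
\end{itemize}
The events for different $w$ are not independent, but they are determined by independent choices. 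Since $u$ lies on only $q^2+1$ cliques and each line through $u$ meets $N(v)$ in a bounded number of points (at most $2q/q$-ish, by \cref{lem:suzukitits}\ref{part:triple-common} or directly from the geometry), the dependencies are limited.

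I would handle this by conditioning on the sides of $u$ and $v$ in every clique through them. After that conditioning, the event for $w$ depends only on $w$'s sides in $C_{uw}$ and $C_{vw}$. Each vertex $w$ has its own independent coin in each clique, so these events are independent across distinct $w$. Each has probability exactly $1/4$, giving an expected $q(q-1)/4$ common neighbours. Chernoff then gives at least $q^2/5$ with failure probability $e^{-\Omega(q^2)}$. A union bound over at most $q^8$ pairs finishes this property.

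\textbf{Independence number.} This is the main obstacle. I would use a container-free counting argument exploiting the line structure. Fix a set $I$ of size $s=2q^3$. $I$ is independent in $H_0$ iff for every clique $C$ the set $I\cap V(C)$ lies on one side of the partition of $C$. This has probability $2^{1-\abs{I\cap C}}$ when $\abs{I\cap C}\ge1$, independently across cliques. So
\[
\bP(I\text{ independent})=\prod_{C}2^{-(\abs{I\cap C}-1)_+}.
\]
Each vertex lies on $q^2+1$ cliques, so $\sum_C\abs{I\cap C}=s(q^2+1)$. The number of cliques meeting $I$ is at most $\abs{\cC}\approx q^5$. Hence
\[
\sum_C(\abs{I\cap C}-1)_+\ge s(q^2+1)-q^5-q^3\ge sq^2/2 \quad\text{for } s=2q^3+1.
\]
The probability is therefore at most $2^{-sq^2/2}$. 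The number of sets is $\binom{q^4}{s}\le 2^{s\log_2 q^4}=2^{4s\log_2 q}$, which is far smaller. So a union bound shows that with probability $1-o(1)$ there is no independent set of size $\ceil{2q^3}$; the constant may need slight adjustment, e.g.\ taking $s=2q^3$ exactly still gives $\sum\ge q^5+q^3$, which suffices.

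Combining the three union bounds, all properties hold simultaneously with positive probability for large $h$.
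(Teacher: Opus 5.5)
Your proof is correct and follows the paper's argument essentially step for step: the same independent random bipartition of each line, the same Chernoff and union bounds for the first two properties, and the same first-moment computation giving probability at most $2^{-(q^5+q^3)}$ for a fixed set of size $2q^3$ (the final choice $s=2q^3$, not $2q^3+1$, is the one you need to get $\alpha(H_0)<2q^3$). The only variation is in the second property, where you get independence by conditioning on the coins of $u$ and $v$, whereas the paper shows directly that the $2q(q-1)$ cliques $C_{uw},C_{vw}$ are pairwise distinct. Your conditioning argument works and in fact shows the events are genuinely independent, contrary to your side remark; drop that remark together with the vague ``$2q/q$-ish'' comment, which are not needed.
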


\begin{proof}
Put $N \coloneqq q^4$. For each $C \in \cC$ independently and uniformly at random pick a 2-colouring $f_C \colon V(C) \to \set{1, 2}$ of $V(C)$. Then, define $H$ as the spanning subgraph of $\ST_q$ which is the union, over all $C \in \cC$, of the complete bipartite subgraph of the complete graph $C$ obtained by restricting to all bichromatic edges with respect to the colouring $f_C$. Observe that regardless of the outcome of the randomness, this is always an edge-disjoint union, since by \cref{lem:suzukitits} the $C \in \cC$ are edge-disjoint.

Note also that $H$ is triangle-free, regardless of the outcome of the randomness: by \cref{lem:suzukitits}\ref{part:triangle}, 
the only triangles in $\ST_q$ are inside the cliques of $\cC$ and these are destroyed once we restrict to the random complete bipartite subgraph.

It suffices to show that $H$ satisfies each bullet point with high probability (as $q \to \infty$). We address the bullet points in turn.
Consider some fixed $C \in \cC$. This has size $q$ by \cref{lem:suzukitits} and so the size of the colour class $f_C^{-1}(\set{1})$ is distributed as $\Bin(q, 1/2)$. 
Standard Chernoff bounds imply that the probability that the size of $f_C^{-1}(\set{1})$ is smaller than $q/3$ or larger than $2q/3$ is at most $\exp(-cq)$ for some absolute constant $c > 0$. Since $f_C^{-1}(\set{2}) = V(C) \setminus f_C^{-1}(\set{1})$, the probability that either $f_C^{-1}(\set{1})$ or $f_C^{-1}(\set{2})$ has size smaller than $q/3$ is at most $\exp(-cq)$.
Hence, by a union bound, the probability that $H$ does not satisfy the first bullet point is at most $\abs{\cC} \cdot \exp(-cq) = (q^5 + q^3) \cdot \exp(-cq) = o(1)$ by \cref{lem:suzukitits}\ref{part:line}.

Fix two distinct vertices $u,v$ that do not belong to a common clique in $\cC$. Then $u$ and $v$ are non-adjacent in $\ST_q$, and hence by \cref{lem:suzukitits}\ref{part:non-adj-common}, they have $q(q-1)$ common neighbours in $\ST_q$. 
Pause to note\footnote{To justify the claimed independence, let $\Gamma$ be the common neighbourhood of $u,v$ in $\ST_q$. If any two vertices $w, w' \in \Gamma$ are adjacent, then, by \cref{lem:suzukitits}\ref{part:triangle}, there are cliques $C_u, C_v \in \cC$ containing $uww'$ and $vww'$ respectively. Edge-disjointness then implies $C_u = C_v$, putting $u, v$ in a common clique, a contradiction. Thus $\Gamma$ is an independent set.
For $w \in \Gamma$, let $C_{uw},C_{vw} \in \cC$ be the unique cliques containing $uw,vw$, respectively. These $2\abs{\Gamma}$ cliques are all distinct: if $C_{\bullet w} = C_{\bullet w'}$ for distinct $w,w'$, then $ww'$ is an edge, and if $C_{uw} = C_{vw}$, then $u, v$ are in a common clique. Thus the survival events depend on disjoint sets of random colourings $f_C, C \in \cC$.} that, by the definition of $H$, each of these common neighbours is preserved independently of all other common neighbours with probability $1/4$ in $H$. Hence, the number of common neighbours of $u$ and $v$ in $H$ is distributed as $\Bin(q(q-1),1/4)$, with expectation $q(q - 1)/4$. Hence, by the Chernoff bound, the probability that $u$ and $v$ have fewer than $q^2/5$ common neighbours in $H$ is at most $\exp(-cq^2)$ for an absolute constant $c>0$. Taking a union bound over all possible pairs $u,v$, the probability that $H$ does not satisfy the second bullet point is at most $N^2 \cdot \exp(-cq^2) = q^8 \cdot \exp(-cq^2) = o(1)$.

Fix a subset of vertices $I \subseteq V(H) = V(\ST_q)$ with $\abs{I} = 2q^3$ arbitrarily and let us give an upper bound for the probability that $I$ is an independent set in $H$. For $C \in \cC$, let \defn{$E_C$} denote the event that $I$ is independent in the complete bipartite subgraph of $C$ which we preserve in $H$. The event $E_C$ occurs if and only if all vertices in $I \cap V(C)$ get assigned the same colour by the randomly chosen $2$-colouring $f_C$, which happens with probability at most $2^{1 - \abs{I\cap V(C)}}$ and is independent for different $C$.
Finally, observe that $I$ is independent in $H$ if and only if all events $E_C, C \in \cC$ occur simultaneously. It follows that
\begin{align*}
\bP[I \text{ is independent in }H] & = \bP\Biggl[\bigcap_{C\in\cC}E_C\Biggr] = \prod_{C\in\cC}\bP[E_C] \\
& \le \prod_{C\in\cC} 2^{1-\abs{I\cap V(C)}} = 2^{\abs{\cC} - \sum_{C \in \cC} \abs{I \cap V(C)}}.
\end{align*}
By \cref{lem:suzukitits}\ref{part:vertex}, every vertex of $I$ lies in exactly $q^2 + 1$ distinct cliques $C\in\cC$ and so $\sum_{C\in\cC} \abs{I\cap V(C)} = (q^2+1)\abs{I} = 2(q^5+q^3)$. Hence, by \cref{lem:suzukitits}\ref{part:line}, we find that $I$ is independent in $H$ with probability at most $2^{q^5+q^3-2(q^5+q^3)}=2^{-(q^5+q^3)}$. Note that there are certainly at most $2^N$ subsets $I$ of $V(H)$ of size $2q^3$. Taking a union bound over all such sets, the probability that $H$ does not satisfy the final bullet point is at most $2^N\cdot 2^{-(q^5+q^3)} = 2^{q^4 - q^5 - q^3} = o(1)$, as required.
\end{proof}

\subsection{The complement}\label{sec:complement}

In this section, we complete the proof of \cref{thm:main}, by showing that the complement of the triangle-free spanning subgraph $H_0$ of the Suzuki--Tits graph given by \cref{lem:random} satisfies the requirements of the theorem.

\begin{proof}[Proof of \cref{thm:main}]
The proof uses two small absolute constants $\varepsilon,\delta \in (0, 1/100)$ whose precise values we will fix at the end, when we see what constraints we need to impose to make the proof work. Here, $\delta$ is the same constant as in the theorem statement. 

Let $m$ be any given, sufficiently large, odd integer and let $N \coloneqq 16^m$ as in the theorem statement. Set $q \coloneqq 2^m$ and so $N=q^4$ and the Suzuki--Tits graph $\ST_q$ as given by \cref{lem:suzukitits} is well-defined. 
Finally, set $\defn{t} \coloneqq \ceil{(1/2 - \delta) N}$.
Let $G$ be the complement of the triangle-free spanning subgraph $H_0$ of the Suzuki--Tits graph $\ST_q$ given by \cref{lem:random}. Note that this choice then directly ensures that $\alpha(G)\le 2$. To prove the theorem, it remains to show that $G$ does not have a dominating $K_t$-model. Suppose towards a contradiction that $G$ contains a dominating $K_t$-model $(T_1,\dots,T_t)$.

In the following, let \defn{$t_1$} denote the number of indices $i$ such that $\abs{V(T_i)} = 1$, \defn{$t_2$} the number of indices $i$ such that $\abs{V(T_i)} = 2$, and \defn{$t_3$} the number of indices $i$ such that $\abs{V(T_i)} \ge 3$. In the following, we will first use the small independence number of $H_0$ to prove that almost all of the $T_i$ have exactly $2$ vertices.

Observe that $t=t_1+t_2+t_3$, and, since the $T_i$ are vertex-disjoint, $t_1 + 2t_2 + 3t_3 \le N$. Furthermore, note that the $t_1$ vertices contained in the singleton-sets $V(T_i)$, by definition of a dominating $K_t$-model, are pairwise adjacent and so form a clique of size $t_1$ in $G$, and hence an independent set of size $t_1$ in $H_0$. It follows from the third item of \cref{lem:random} that $t_1 \leq \alpha(H_0) \leq 2q^3 < \delta N$, provided $m$ is sufficiently large.
This also implies that
\begin{align*}
t_3 & =(t_1+2t_2+3t_3)-2(t_1+t_2+t_3)+t_1\le N-2t+t_1 \\ & \le N-(1-2\delta)N+\delta N = 3 \delta N
\end{align*}
and so
\[
t_2=t-(t_1+t_3)\ge (1/2-\delta)N - \delta N- 3 \delta N = (1/2 - 5 \delta)N.
\]
In the remainder, we will focus only on the $t_2$ connected subgraphs in the model that have two vertices. Concretely, let $(T_1',\dots,T_{t_2}')$ be the dominating $K_{t_2}$-model obtained from $(T_1,\dots,T_t)$ by omitting all the connected subgraphs whose order is distinct from two, and preserving the ordering of the connected subgraphs on two vertices. For each $j\in [t_2]$, let \defn{$u_j$}, \defn{$v_j$} be vertices such that $V(T_j') = \set{u_j,v_j}$, and observe that $u_1v_1, \dots, u_{t_2}v_{t_2}$ form a matching in $G$ such that, for all $1\le i<j\le t_2$, both $u_j$ and $v_j$ are adjacent to at least one of $u_i, v_i$ (in $G$). Translating this into the complement $H_0$ of $G$, we find that, for all $i \in [t_2]$, vertices $u_i,v_i$ are distinct and non-adjacent in $H_0$ and that no common neighbour of $u_i$ and $v_i$ in $H_0$ lies in $\bigcup_{i<j\le t_2}\set{u_j,v_j}$. 

Let $\defn{R} \coloneqq V(G)\setminus \set{u_1,v_1,\ldots,u_{t_2},v_{t_2}}$ denote the set of vertices outside the restricted two-vertex model. Note that $R$ is quite small: 
\[
\abs{R} = N - 2t_2 \le N - 2 (1/2 - 5\delta)N = 10 \delta N.
\]
Our condition above can be rephrased once more as saying that for every $i\in [t_2]$, all the common neighbours of $u_i, v_i$ in $H_0$ lie in $R\cup \bigcup_{j<i}\set{u_j,v_j}$.

Next, set $\defn{k} \coloneqq \lceil \varepsilon N\rceil$. Our lower bound on $t_2$ ensures $k\le t_2$ for sufficiently large $N$. From now on, our analysis will focus on the first $k$ pairs $u_1,v_1,\ldots,u_k,v_k$ of the model. Let us define 
\[
J \coloneqq \set{i\in [k]: u_i\text{ and }v_i\text{ do not belong to a common clique in }\cC}
\]
to be the set of the first $k$ indices corresponding to pairs which do not lie on a common line of the Suzuki--Tits graph. Then, for every $i\in [k]\setminus J$, there exists some $\defn{C_i} \in \cC$ such that $u_i, v_i\in V(C_i)$ but are non-adjacent in $H_0$. By the first item of \cref{lem:random}, vertices $u_i,v_i$ must lie on the same side of the complete bipartite graph $H_0 \cap C_i$. By the same item, both sides of this complete bipartite graph have size at least $q/3$, and this directly implies that $u_i, v_i$ have at least $q/3$ common neighbours in $H_0$ that lie in $V(C_i)$, for every $i \in [k] \setminus J$. 

We will now give a lower bound on the size of $J$. To do so, we will apply \cref{lem:line-filling}, essentially showing that if $J$ were too small, the dominating-model property gives a sequence of subsets of $V(\ST_q)$ as in that lemma which starts off small but grows too large. 

To make this precise, define a sequence $Y_0,Y_1,\dots,Y_k$ of subsets of $V(\ST_q)$ as follows: $Y_0 \coloneqq R\cup \set{u_j,v_j : j\in J}$ and $Y_i \coloneqq Y_0\cup \set{u_j,v_j : j\in [i]\setminus J}$ for each $i\in [k]$. Using \cref{lem:line-filling}, we now show the following claim.

\begin{claim}
If $d\in\bN$ satisfies $10 \delta N + 2\abs{J}<\binom{d+4}{4}$ and $d < q/3$, then $2k\le dq^3$. 
\end{claim}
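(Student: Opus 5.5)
We apply \cref{lem:line-filling} directly to the sequence $Y_0,Y_1,\dots,Y_k$ defined above, with the given $d$. Three things need checking: $Y_0$ is small, each step satisfies the hypothesis of \cref{lem:line-filling}, and $Y_k$ is large.

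For the size of $Y_0$, we have $\abs{Y_0}\le \abs{R}+2\abs{J}\le 10\delta N+2\abs{J}<\binom{d+4}{4}$, using the bound $\abs{R}\le 10\delta N$ established above. So the initial hypothesis of \cref{lem:line-filling} holds.

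Next we verify the step condition for each $i\in[k]$. If $i\in J$, then $Y_i=Y_{i-1}$ and there is nothing to check. If $i\in[k]\setminus J$, then $Y_i=Y_{i-1}\cup\set{u_i,v_i}\subseteq Y_{i-1}\cup V(C_i)$, since $u_i,v_i\in V(C_i)$. It remains to show $\abs{V(C_i)\cap Y_{i-1}}>d$.
\begin{itemize}
\item As observed above, $u_i,v_i$ have at least $q/3$ common neighbours in $H_0$ lying in $V(C_i)$.
\item By the dominating-model property, every common neighbour of $u_i,v_i$ in $H_0$ lies in $R\cup\bigcup_{j<i}\set{u_j,v_j}$.
\item For $j<i$, the pair $\set{u_j,v_j}$ lies in $Y_0$ if $j\in J$ and in $Y_{i-1}$ if $j\in[i-1]\setminus J$. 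Together with $R\subseteq Y_0$, this gives $R\cup\bigcup_{j<i}\set{u_j,v_j}\subseteq Y_{i-1}$.
\end{itemize}
Hence $\abs{V(C_i)\cap Y_{i-1}}\ge q/3>d$.

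Applying \cref{lem:line-filling} now gives $\abs{Y_k}\le dq^3$. On the other hand, $Y_k\supseteq\set{u_j,v_j: j\in[k]}$, which consists of $2k$ distinct vertices because the pairs $\set{u_j,v_j}$ are vertex-disjoint. Therefore $2k\le \abs{Y_k}\le dq^3$.

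The only point requiring care is the inclusion of the earlier pairs into $Y_{i-1}$, which the case split on $j\in J$ handles; no step is a real obstacle.
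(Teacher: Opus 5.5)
Your proposal is correct and follows the paper's proof essentially step for step: bound $\abs{Y_0}$ using $\abs{R}\le 10\delta N$, verify the step condition of \cref{lem:line-filling} from the at least $q/3$ common neighbours of $u_i,v_i$ on $C_i$, which must lie in $R\cup\bigcup_{j<i}\set{u_j,v_j}\subseteq Y_{i-1}$, then apply the lemma and use $\abs{Y_k}\ge 2k$. Your case split on whether $j\in J$ just spells out the inclusion into $Y_{i-1}$, which the paper states without comment.
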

\begin{proofclaim}
Suppose the stated conditions hold. The first condition yields
\[
\abs{Y_0} = \abs{R} + 2\abs{J} \le 10 \delta N + 2\abs{J}<\tbinom{d+4}{4}.
\]
Now consider any $i\in [k]$. If $i\in J$, then $Y_i=Y_{i-1}$ by definition. On the other hand, if $i\in [k]\setminus J$, then $Y_i=Y_{i-1}\cup \set{u_i,v_i}\subseteq Y_{i-1}\cup V(C_i)$ by definition. To make the application of \cref{lem:line-filling} possible, we wish to show that $|V(C_i)\cap Y_{i-1}|>d$ and, since $d<\frac{q}{3}$, it suffices to show that at least $q/3$ vertices in $V(C_i)$ also lie in $Y_{i-1}$. However, this is easy: we previously established that $u_i,v_i$ have at least $q/3$ common neighbours (with respect to $H_0$) in $C_i$, and all of them must lie in $R\cup \bigcup_{j<i}\set{u_j,v_j}\subseteq Y_{i-1}$, as desired.

Hence we may apply \cref{lem:line-filling} and so $\abs{Y_k} \le dq^3$. Since $\abs{Y_k} \ge \abs{\set{u_j,v_j : j\in [k]}} = 2k$, it follows that $2k\le dq^3$, as required. 
\end{proofclaim}

We now prove a lower bound on $J$ using the previous claim: let $\defn{d} \coloneqq \ceil{\varepsilon q}$. Then, since $m$ is sufficiently large, $d < q/3$ and
$2k\ge 2\varepsilon N=2\varepsilon q^4>dq^3$. To avoid contradicting the previous claim, we must have 
\[
10 \delta N + 2\abs{J} \ge \binom{d+4}{4}\ge \frac{d^4}{24}\ge \frac{\varepsilon^4}{24}q^4=\frac{\varepsilon^4}{24}N,
\]
which implies that $\abs{J} \geq (\varepsilon^4/48 - 5\delta)N$.
We will choose $\eps$ and $\delta$ so that $\varepsilon^4/48 > 5\delta$ and so $\abs{J} = \Omega(N)$. 

The final step of the proof arrives at the desired contradiction by a double-counting argument: we now know that the first $k$ pairs of the model include a substantial number of pairs $\set{u_j,v_j}, j\in J$ where $u_j$ and $v_j$ are not adjacent in the Suzuki--Tits graph $\ST_q$ (by definition of $J$).  
Recall that, for each $i \in J$, the common neighbours of $u_i, v_i$ in $H_0$ lie in $R\cup \bigcup_{1\le j<i} \set{u_j,v_j} \subseteq R \cup \set{u_1,v_1,\dots,u_k,v_k} \eqqcolon \defn{W}$. 
This set is rather small: of size $\abs{W} = \abs{R} + 2k\le (10\delta + 3\varepsilon)N$, since $m$ is sufficiently large.
On the other hand, for each $i \in J$, there are at least $q^2/5$ common neighbours of $u_i, v_i$ in $H_0$ (by the second item of \cref{lem:random}) and the allowed intersections of these common neighbourhoods are rather small on average (we will use \cref{lem:suzukitits}\ref{part:triple-common} and \ref{part:quad-common} for this).
Combined with our lower bound for $\abs{J}$, this will lead to a space problem and the desired contradiction. 

We now make this intuition precise. For each $j\in J$, let \defn{$\Gamma_j$} denote the set of common neighbours of $u_j,v_j$ in $H_0$. As previously established, every $\Gamma_j$ is contained in $W$ and has size at least $q^2/5$. Hence
\[
\sum_{j\in J}\abs{\Gamma_j}\ge \frac{q^2\abs{J}}{5}.
\]
Next, we give an upper bound on the total pairwise overlaps of these common neighbourhoods using parts~\ref{part:triple-common} and~\ref{part:quad-common} of \cref{lem:suzukitits}. Since $H_0$ is a spanning subgraph of the Suzuki--Tits graph, part~\ref{part:quad-common} of the lemma immediately implies that for every $j\in J$ there exists a subset $S_j \subseteq V(\ST_q) = V(H_0)$ with $\abs{S_j} < q^2$ such that, for all $j'\in J\setminus \set{j}$, we have $\abs{\Gamma_j\cap \Gamma_{j'}} \le 2500$ unless $\set{u_{j'},v_{j'}}\subseteq S_j$, in which case $\abs{\Gamma_j\cap \Gamma_{j'}} \le 2q$ by \cref{lem:suzukitits}\ref{part:triple-common}. For every fixed $j\in J$, there can be at most $q^2/2$ indices $j'$ with $\set{u_{j'},v_{j'}}\subseteq S_j$. All in all, when we consider the sum of $\abs{\Gamma_j\cap \Gamma_{j'}}$ over all ordered pairs $(j,j')\in J^2$ with $j\ne j'$, the pairs where $\set{u_{j'}, v_{j'}} \nsubseteq S_j$ contribute at most $2500 \abs{J}^2$ and the remaining pairs contribute at most $\abs{J} \cdot q^2/2 \cdot 2q = \abs{J} q^3$. The diagonal terms have a total contribution of $\sum_{j\in J}\abs{\Gamma_j}\le \abs{J}q(q-1)\le \abs{J}q^3$ by \cref{lem:suzukitits}\ref{part:non-adj-common}. Hence
\[
\sum_{(j,j')\in J^2} \abs{\Gamma_j\cap \Gamma_{j'}} \le 2500\abs{J}^2 + 2\abs{J} q^3 \leq 3000 \abs{J}^2,
\]
where the final inequality used that $\abs{J} = \Omega(N) = \Omega(q^4)$ and $q$ is sufficiently large.
We now consider the sum from the perspective of the common neighbours (which we know all lie in $W$). For each potential common neighbour $w \in W$ of such a pair, let \defn{$c_w$} denote the number of indices $j\in J$ such that $w \in \Gamma_j$. 
Writing \defn{$\bE_{w \in W}$} for the average $\abs{W}^{-1} \sum_{w \in W}$, we have
\begin{align*}
    \bE_{w\in W} c_w & = \abs{W}^{-1}\sum_{j\in J}\abs{\Gamma_j}\ge \frac{q^2}{5} \cdot \frac{\abs{J}}{\abs{W}}, \\
    \bE_{w\in W} c_w^2 &= \abs{W}^{-1} \sum_{(j,j')\in J^2}\abs{\Gamma_j\cap \Gamma_{j'}} \le 3000 \abs{W} \cdot \frac{\abs{J}^2}{\abs{W}^2}.
\end{align*}
Jensen's inequality implies that $\bE_{w\in W} c_w^2 \geq \bigl(\bE_{w\in W} c_w\bigr)^2$ and so $3000 \abs{W} \geq q^4/25 = N/25$. But $\abs{W} \leq (10 \delta + 3 \eps) N$ and so, if $\eps$ and $\delta$ are chosen to be sufficiently small such that $10 \delta + 3 \eps < 1/75000$, then we obtain the desired contradiction.
The only constraints for our choices of these constants in the argument are that $\varepsilon, \delta \in (0,1/100)$, $5\delta<\frac{\varepsilon^4}{48}$ so that $\abs{J} = \Omega(N)$,
and $10 \delta + 3 \varepsilon < 1/75000$ for the final contradiction. 
These constraints can certainly all be met by choosing, say, $\eps \coloneqq 10^{-6}$ and $\delta \coloneqq 10^{-30}$.
\end{proof}

{
\fontsize{11pt}{12pt}
\selectfont

\hypersetup{linkcolor={red!70!black}}
\setlength{\parskip}{2pt plus 0.3ex minus 0.3ex}

\bibliographystyle{auxfile.bst}
\bibliography{bib.bib}
}

\appendix
\section{Proving the properties of the Suzuki--Tits graph}\label[appendix]{app:Suzuki-Tits}

In this section we prove \cref{lem:suzukitits}. 

The \defn{Suzuki groups} $\Sz(q)$ ($q = 2^{2h + 1}$, $h \geq 1$) are an infinite family of finite simple groups. The group $\Sz(q)$ can be viewed as a subgroup of $\GL_4(\bF_q)$ (so as a group of linear isomorphisms $\bF_q^4 \to \bF_q^4$). 
The intersection of $\Sz(q)$ with the scalar matrices is trivial and so $\Sz(q)$ can also be viewed as a subgroup of $\PGL_4(\bF_q)$. The action of this subgroup on the 3-dimensional projective space $\PG(3, q)$ has two orbits: the \defn{Suzuki--Tits ovoid}
\[
    \cO \coloneqq \set{[1 : a : b : F(a, b)] \colon a, b \in \bF_q} \cup \set{[0 : 0 : 0 : 1]},
\]
where $F(a, b) = a^{\sigma + 2} + ab + b^{\sigma}$ ($\defn{\sigma = 2^{h + 1}}$) and its complement. 
Furthermore, the action is transitive on each of $\cO$ and $\PG(3, q) \setminus \cO$ (in fact, it is doubly transitive on $\cO$ although we will not need this)~\cite[\S\,2]{CCMP}.
Recall that an ovoid, such as $\cO$, is a set of $q^2 + 1$ points in $\PG(3, q)$, no three of which are collinear. 
Finally, we will need the following root counting lemma of Breuillard, Green, and Tao.

\begin{lem}[{\cite[Lem.~B.1.]{BGT}}]\label{lem:poly}
    Let $F \in \bF_q[X,Y]$ be a non-zero polynomial of total degree at most $d$. Then $F(t, t^{\sigma})$ has at most $2d^2$ roots in $\bF_q$.
\end{lem}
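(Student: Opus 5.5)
The plan is to prove the lemma by exploiting the identity $\sigma^2 = 2^{2h+2} = 2q$. It gives $(t^{\sigma})^{\sigma} = t^{2q} = t^2$ for all $t \in \bF_q$. This turns the single ``transcendental'' relation $s = t^{\sigma}$ into two polynomial relations of low degree, and Bezout's theorem then does the rest.

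Concretely, let $t \in \bF_q$ be a root of $F(t, t^{\sigma})$ and put $s \coloneqq t^{\sigma}$, so that $F(t,s) = 0$. Write $F^{\sigma}$ for the polynomial obtained from $F$ by raising every coefficient to the power $\sigma$. Since $x \mapsto x^{\sigma}$ is a field automorphism of $\bF_q$, applying it to $F(t,s)=0$ gives $F^{\sigma}(t^{\sigma}, s^{\sigma}) = 0$, that is, $F^{\sigma}(s, t^2) = 0$. Thus $(t,s)$ is a common zero of the two plane curves
\[
    F(X,Y) = 0 \qquad\text{and}\qquad G(X,Y) \coloneqq F^{\sigma}(Y, X^2) = 0,
\]
of total degrees at most $d$ and $2d$ respectively. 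Distinct roots $t$ give distinct points $(t, t^{\sigma})$. So if $F$ and $G$ have no common factor, Bezout's theorem over $\overline{\bF_q}$ bounds the number of such points, and hence of roots, by $d \cdot 2d = 2d^2$.

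It remains to remove the common-factor issue. First I would reduce to $F$ irreducible over $\bF_q$. The roots of $F(t,t^{\sigma})$ are the union of the roots of $F_i(t,t^{\sigma})$ over the irreducible factors $F_i$ of degree $d_i$, and $\sum 2d_i^2 \le 2d^2$. For irreducible $F$, it then suffices to rule out $F \mid G$; if $F \nmid G$, then $F$ and $G$ share no component and Bezout applies. This divisibility exclusion is where I expect the main obstacle.

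The approach I would try first is a leading-term argument. Order monomials by the weight $w(X^iY^j) = i + j\sqrt2$, which is irrational and hence totally orders the monomials, so every polynomial has a unique leading monomial. The substitution $X \mapsto Y$, $Y \mapsto X^2$ sends $X^iY^j$ to $X^{2j}Y^i$, of weight $\sqrt2\,(i + j\sqrt2)$. So it multiplies every weight by $\sqrt2$ and preserves the order, and the leading monomial of $G$ has exactly $\sqrt2$ times the weight of that of $F$.

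From $F \mid G$ one gets that the zero set of $F$ is mapped into itself by the map induced by $(x,y) \mapsto (y^{1/\sigma}, x^{2/\sigma})$, after composing with inverse Frobenius. Iterating this map and comparing weights (or Newton polygons, which get transformed by a linear map of determinant $-2$) should force $F$ to be a monomial times a constant, or linear of a degenerate form. Those cases can be checked by hand: a monomial $F = cX^aY^b$ gives only the root $t=0$. If this weight argument does not close, my fallback is the argument of Breuillard--Green--Tao themselves: bound the number of roots directly by a resultant computation in $Y$ between $F$ and $G$, noting that the resultant is a non-zero one-variable polynomial of degree at most $2d^2$ unless there is a common factor.
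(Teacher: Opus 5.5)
The paper quotes this lemma from Breuillard--Green--Tao without proof, so your argument has to stand on its own. Its skeleton is fine. With $G(X,Y)=F^{\sigma}(Y,X^2)$, every root $t$ yields the common zero $(t,t^{\sigma})$ of $F$ and $G$, and $\deg G\le 2\deg F$. Passing to irreducible factors is harmless since $\sum 2d_i^2\le 2d^2$. For irreducible $F$ with $F\nmid G$, the two are coprime over $\bF_q$, hence over $\overline{\bF_q}$, so B\'ezout gives at most $2d^2$ roots.

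The gap is the step you yourself flag: excluding $F\mid G$ is never proved, and none of your suggestions proves it.
\begin{itemize}
\item Comparing leading weights in $G=FH$ only shows that $H$ has leading weight $(\sqrt2-1)(a+b\sqrt2)=(2b-a)+(a-b)\sqrt2$, where $X^aY^b$ leads $F$. This is the satisfiable condition $b\le a\le 2b$, not a contradiction.
\item Newton polygons are equally inconclusive: $(i,j)\mapsto(2j,i)$ sends $[0,1]^2$ to $[0,2]\times[0,1]$, which is $[0,1]^2$ plus a segment.
\item Iterating the point map $\phi\colon(x,y)\mapsto(y^{1/\sigma},x^{2/\sigma})$ gives nothing new. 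Its square $\phi^2$ is the inverse $q$-power Frobenius, which preserves the zero set of every polynomial over $\bF_q$.
\end{itemize}
So ``should force $F$ to be a monomial'' is unsupported. The resultant fallback only restates that the resultant is non-zero when there is no common factor, which is exactly the case still to be ruled out.

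The missing idea is to iterate algebraically instead. Your $G$ is precisely the paper's $F^{\ast}$. The map $P\mapsto P^{\ast}$ is a ring endomorphism of $\bF_q[X,Y]$: the coefficient automorphism $c\mapsto c^{\sigma}$ followed by the substitution $X\mapsto Y$, $Y\mapsto X^2$. It satisfies $P^{\ast\ast}=P^2$, because $c^{\sigma^2}=c^{2q}=c^2$ on $\bF_q$ and the characteristic is $2$. Now suppose $F^{\ast}=FH$ with $H\in\bF_q[X,Y]$. Applying $\ast$ gives $F^2=F^{\ast}H^{\ast}=FHH^{\ast}$, so $F=HH^{\ast}$. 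Since $\ast$ maps distinct monomials to distinct monomials and only $1$ to a constant, $H$ and $H^{\ast}$ are both constant or both non-constant. The former makes $F$ constant and the latter contradicts irreducibility. Putting this in place of the weight heuristic completes your proof.
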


\begin{proof}[Proof of \cref{lem:suzukitits}]
    We first define the graph $\ST_q$. Define a set of special directions
    \[
        \defn{D} \coloneqq \set{(1, a, b, F(a, b)) \colon a, b \in \bF_q} \cup \set{(0, 0, 0, 1)} \subset \bF_q^4,
    \]
    where $F(a, b)$ is as in the definition of the Suzuki--Tits ovoid, $\cO$.
    Note that the directions in $D$ correspond exactly to the points of $\cO$. Since no three points of $\cO$ are collinear, it follows that any set of at most three vectors of $D$ is linearly independent over $\bF_q$.
    
    $\ST_q$ has vertex-set $\bF_q^4$ and vertices $\mathbold{x}, \mathbold{y} \in \bF_q^4$ are adjacent if and only if their sum (or difference -- $\bF_q$ has characteristic 2) is a non-zero multiple of some direction in $D$, that is, if $\mathbold{x} + \mathbold{y} \in \defn{\bF_q^\times \cdot D} \coloneqq \set{\lambda d \colon \lambda \in \bF_q^\times, d \in D}$.

    Before proving the lemma, we note some properties of $\bF_q$. It has characteristic 2 and so the map $x \mapsto x^2$ is a field automorphism of $\bF_q$ (the \emph{Frobenius automorphism}). Even more importantly, because $q$ is a power of two with an odd exponent, the Frobenius automorphism has a square root: $x \mapsto x^{\sigma}$ is a field automorphism of $\bF_q$ (the \emph{Tits automorphism}) satisfying $x^{\sigma^2} = x^{2q} = x^2$. In particular, $x^{\sigma^2 - 1} = x$. From these it follows that the following maps are all bijections of $\bF_q$ (with the third and fourth being inverses):
    \[
        x \mapsto x^2, \qquad x \mapsto x^{\sigma}, \qquad x \mapsto x^{\sigma - 1}, \qquad x \mapsto x^{\sigma + 1}.
    \]
    Since $(\mathbold{x} + \mathbold{t}) + (\mathbold{y} + \mathbold{t}) = \mathbold{x} + \mathbold{y}$ in $\bF_q^4$, translations of $\bF_q^4$ are graph automorphisms of $\ST_q$ and so $\ST_q$ is vertex-transitive. 
    Also note that scalings $\mathbold{x} \mapsto \lambda \mathbold{x}$ ($\lambda \in \bF_q^\times$) are graph automorphisms of $\ST_q$.
    Since the Suzuki group, when viewed as a subgroup of $\PGL_4(\bF_q)$, stabilises $\cO$, when we instead view it as a subgroup of $\GL_4(\bF_q)$, it stabilises $\bF_q^\times \cdot D$. In particular, every linear isomorphism $\phi \in \Sz(q) \leqslant \GL_4(\bF_q)$ maps edges of $\ST_q$ to edges of $\ST_q$ and non-edges to non-edges. That is, every element of the Suzuki group is a graph automorphism of $\ST_q$. Consider edges $\mathbold{x_1} \mathbold{y_1}, \mathbold{x_2} \mathbold{y_2}$ of $\ST_q$: then each $\mathbold{x_i} + \mathbold{y_i} \in \bF_q^\times \cdot D$.
    Since the Suzuki group acts transitively on $\cO$, there is some $\phi \in \Sz(q) \leqslant \GL_4(\bF_q)$ such that $\phi(\mathbold{x_1} + \mathbold{y_1})$ is a multiple of $\mathbold{x_2} + \mathbold{y_2}$. Composing this with a suitable translation and scaling gives a graph automorphism of $\ST_q$ that maps $\mathbold{x_1}$ to $\mathbold{x_2}$ and $\mathbold{y_1}$ to $\mathbold{y_2}$. In particular, $\ST_q$ is arc-transitive. Similarly, since the Suzuki group acts transitively on $\PG(3, q) \setminus \cO$, $\ST_q$ is co-arc-transitive.
    
    Note that the edges of $\ST_q$ are exactly those pairs of points which lie on a line that is parallel to some direction in $D$. In particular, the edge-set of $\ST_q$ is a union of a collection $\cC$ of cliques where each clique is a line in $\bF_q^4$ (and so has order $q$). 
    Since any two points on a line define its direction, these cliques are edge-disjoint.
    For a vertex $\mathbold{v}$, there is one clique containing $\mathbold{v}$ for each direction in $D$ and so $\mathbold{v}$ is in $\abs{D} = q^2 + 1$ cliques of $\cC$. Furthermore, $\mathbold{v}$ is adjacent to $q - 1$ vertices in each clique and so, since the cliques are edge-disjoint, $\mathbold{v}$ has degree $(q^2 + 1)(q - 1)$. Finally, counting the number of pairs $(\mathbold{v}, C)$ where $\mathbold{v} \in C \in \cC$ in two ways shows that $\abs{\cC} \cdot q = q^4 (q^2 + 1)$ and so $\abs{\cC} = q^3(q^2 + 1)$. This completes the proofs of \ref{part:line} and \ref{part:vertex}.

    Let $\mathbold{x_1} \mathbold{x_2} \mathbold{x_3}$ be a triangle in $\ST_q$. 
    Then there are $\mathbold{d_{1,2}}, \mathbold{d_{2,3}}, \mathbold{d_{3,1}} \in D$ and $\lambda_{1,2}, \lambda_{2,3}, \lambda_{3,1} \in \bF_q^\times$ such that $\mathbold{x_i} + \mathbold{x_{i + 1}} = \lambda_{i, i + 1} \mathbold{d_{i, i + 1}}$ for all $i \in \bZ/3 \bZ$. 
    Summing these three equations gives $\mathbold{0} = \lambda_{1, 2} \mathbold{d_{1, 2}} + \lambda_{2, 3} \mathbold{d_{2, 3}} + \lambda_{3, 1} \mathbold{d_{3, 1}}$. 
    Since any set of at most three vectors of $D$ is linearly independent, the vectors $\mathbold{d_{1,2}}, \mathbold{d_{2,3}}, \mathbold{d_{3,1}}$ are all equal and so $\mathbold{x_1}, \mathbold{x_2}, \mathbold{x_3}$ lie in the same clique of $\cC$. This gives \ref{part:triangle}.

    Consider a pair of adjacent vertices $\mathbold{x}, \mathbold{y}$. They both lie in some $C \in \cC$. By \ref{part:triangle}, any common neighbour of $\mathbold{x}, \mathbold{y}$ also lies in $C$ and so the common neighbourhood of $\mathbold{x}, \mathbold{y}$ is $C \setminus \set{\mathbold{x}, \mathbold{y}}$ which is a $K_{q - 2}$. Thus we have \ref{part:adj-common}.
    For what follows it will be useful to characterise which vectors are in $\bF_q^\times \cdot D$.

    \begin{claim}\label{claim:Q}
        Define $Q(x, y, z, w) \coloneqq x^{\sigma + 1}w + y^{\sigma + 2} + x^\sigma y z + x^2 z^{\sigma}$. If $x \neq 0$, then $\mathbold{v} = (x, \bullet, \bullet, \bullet) \in \bF_q^\times \cdot D$ if and only if $Q(\mathbold{v}) = 0$. If $x = 0$, then $\mathbold{v} \in \bF_q^\times \cdot D$ implies $Q(\mathbold{v}) = 0$.
    \end{claim}

    \begin{proofclaim}
        Let $\mathbold{v} = (x, y, z, w)$. If $x = 0$ and $\mathbold{v} \in \bF_q^\times \cdot D$, then $x = y = z = 0$ so $Q(\mathbold{v}) = 0$. Otherwise suppose that $x \neq 0$. Let $\lambda \in \bF_q^\times$ and $a, b, c \in \bF_q$ be such that $\mathbold{v} = \lambda \cdot (1, a, b, c)$. Then $Q(\mathbold{v}) = \lambda^{\sigma + 2} \cdot (c + a^{\sigma + 2} + ab + b^\sigma) = \lambda^{\sigma + 2} \cdot (c + F(a, b))$ and so $Q(\mathbold{v}) = 0$ if and only if $c = F(a, b)$ which occurs if and only if $\mathbold{v} \in \bF_q^\times \cdot D$.
    \end{proofclaim}

    We write \defn{$\Gamma(\mathbold{v_1}, \dots, \mathbold{v_r})$} for the common neighbourhood of $\mathbold{v_1}$, \ldots, $\mathbold{v_r}$.
    By \ref{part:adj-common}, any triple containing an adjacent pair has common neighbourhood of size at most $q-2$. 
    Thus, to prove \ref{part:non-adj-common}, \ref{part:triple-common}, and \ref{part:quad-common} it suffices to show that, for any pair of non-adjacent vertices $\mathbold{v_1}, \mathbold{v_2}$, there is a set $S$ of fewer than $q^2$ vertices such that, for any vertices $\mathbold{v_3}, \mathbold{v_4}$ (where $\mathbold{v_1}, \mathbold{v_2}, \mathbold{v_3}, \mathbold{v_4}$ are all distinct):
    \begin{enumerate}[noitemsep, label=(\alph{*}'), start=5]
        \item\label{part':non-adj-common} $\abs{\Gamma(\mathbold{v_1}, \mathbold{v_2})} = q(q - 1)$,
        \item\label{part':triple-common} $\abs{\Gamma(\mathbold{v_1}, \mathbold{v_2}, \mathbold{v_3})} < 2q$,
        \item\label{part':quad-common} $\abs{\Gamma(\mathbold{v_1}, \mathbold{v_2}, \mathbold{v_3}, \mathbold{v_4})} \leq 2500$ unless $\mathbold{v_3}, \mathbold{v_4} \in S$.
    \end{enumerate}
    Let $\mathbold{e_1}, \mathbold{e_2}, \mathbold{e_3}, \mathbold{e_4}$ denote the standard basis vectors of $\bF_q^4$ and $\mathbold{0}$ denote the zero vector.
    By the co-arc-transitivity of $\ST_q$ it suffices to prove \ref{part':non-adj-common}, \ref{part':triple-common}, \ref{part':quad-common} for $\mathbold{v_1} = \mathbold{0} = (0, 0, 0, 0)$ and $\mathbold{v_2} = \mathbold{e_3} = (0, 0, 1, 0)$. We now parametrise the common neighbours of $\mathbold{0}$ and $\mathbold{e_3}$.

    \begin{claim}\label{claim:pst}
        The set of common neighbours of $\mathbold{0}$ and $\mathbold{e_3}$ is $\set{\mathbold{p}(s, t) \colon s \in \bF_q^\times, t \in \bF_q}$ where
        \[
            \mathbold{p}(s, t) = (s^{\sigma + 1}, s^{\sigma}, t, \tfrac{1 + t + t^{\sigma}}{s}).
        \]
    \end{claim}

    \begin{proofclaim}
        Let $\mathbold{v} = (x, y, z, w) \in \Gamma(\mathbold{0}, \mathbold{e_3})$. Since $\mathbold{v}$ is adjacent to $\mathbold{e_3}$ it cannot be in $\Span{(0, 0, 0, 1)}$ and so, since it is adjacent to $\mathbold{0}$, $x \neq 0$. 
        By \cref{claim:Q}, $Q(\mathbold{v}) = 0 = Q(\mathbold{v} + \mathbold{e_3})$. 
        Thus, $0 = Q(\mathbold{v} + \mathbold{e_3}) - Q(\mathbold{v}) = x^{\sigma} y + x^2 \bigl((z + 1)^{\sigma} - z^\sigma\bigr) = x^{\sigma} y + x^2$ so $y = x^{2 - \sigma}$. 
        Let $s \in \bF_q^\times$ such that $s^{\sigma + 1} = x$ (this is one of the bijections of $\bF_q$ we defined previously so $s$ exists) and set $t = z$. Then $y = x^{2 - \sigma} = s^{2 + \sigma - \sigma^2} = s^{\sigma}$. Finally $Q(\mathbold{v}) = 0$ gives $w = (1 + t + t^{\sigma})/s$. Conversely, it is easy to check that $\mathbold{p}(s, t)$ is adjacent to both $\mathbold{0}$ and $\mathbold{e_3}$.
    \end{proofclaim}
    Since $(s, t) \mapsto \mathbold{p}(s, t)$ is injective ($s \mapsto s^{\sigma}$ is a bijection), this immediately implies \ref{part':non-adj-common}.

    From here the algebra becomes rather nasty as we aim to write down the common neighbours of three vertices. We first define an operator (sometimes called a \emph{Tits endomorphism} as it is the square root of the Frobenius endomorphism $P \mapsto P^2$) for two-variable rational functions. For a polynomial $P = \sum c_{ij} X^i Y^j \in \bF_q[X, Y]$ define
    \[
        \defn{P^\ast} \coloneqq \sum c_{ij}^\sigma X^{2j} Y^i.
    \]
    This operator is additive, multiplicative, and satisfies
    \[
        P^{\ast\ast}=P^2,\qquad
        \deg P^\ast\leq 2\deg P,\qquad
        P^\ast(s,s^\sigma) = P(s,s^\sigma)^\sigma.
    \]
    We also apply it to rational functions via $(P/Q)^\ast = P^\ast/Q^\ast$.

    \begin{claim}\label{claim:ABC}
        Let $\mathbold{v} = (x, y, z, w)$. Each common neighbour of $\mathbold{0}$, $\mathbold{e_3}$, and $\mathbold{v}$ is a $\mathbold{p}(s, t)$ where $s \in \bF_q^{\times}$ and $t \in \bF_q$ satisfy
        \begin{equation}\label{eq:trip-adj}
            A_{\mathbold{v}}(s,s^\sigma) \cdot  t^\sigma + 
            B_{\mathbold{v}}(s,s^\sigma) \cdot t + 
            C_{\mathbold{v}}(s,s^\sigma) = 0,
        \end{equation}
        and
        \begin{align*}
            \ell_{\mathbold{v}}(X, Y) & = XY + x, 
            \qquad A_{\mathbold{v}}(X, Y) = (xX + x^{\sigma}) \ell_{\mathbold{v}}, 
            \qquad B_{\mathbold{v}}(X, Y) = (x + yX) \ell_{\mathbold{v}}^{\ast}, \\
            C_{\mathbold{v}}(X, Y) & = (1 + w X) \ell_{\mathbold{v}} \ell_{\mathbold{v}}^\ast + X(X^2 + y^{\sigma})(Y + y)^2 + zX(Y + y) \ell_{\mathbold{v}}^\ast + z^{\sigma} X \ell_{\mathbold{v}}^2.
        \end{align*}
    \end{claim}

    \begin{proofclaim}
        Note that $\ell_\mathbold{v}^\ast(X, Y) = X^2 Y + x^\sigma$ and so $\ell_{\mathbold{v}}(s, s^{\sigma}) = s^{\sigma + 1} + x$ and $\ell_{\mathbold{v}}^\ast(s, s^\sigma) \allowbreak = s^{2 + \sigma} + x^\sigma$.
        Making repeated use of $(a + b)^\sigma = a^\sigma + b^\sigma$ and $a^{\sigma^2} = a^2$, we have
        \begin{align*}
            s Q\bigl(\mathbold{p}(s, t) + \mathbold{v}\bigr) & = sQ(s^{\sigma + 1} + x, s^\sigma + y , t + z, \tfrac{1 + t + t^\sigma}{s} + w) \\
            & = s\bigl[(s^{2 + \sigma} + x^\sigma)(s^{\sigma + 1} + x)(\tfrac{1 + t + t^\sigma}{s} + w) + (s^{2} + y^\sigma)(s^{\sigma} + y)^2 \\
            & \quad + (s^{2 + \sigma} + x^\sigma)(s^\sigma + y)(t + z) + (s^{\sigma + 1} + x)^2(t^\sigma + z^\sigma)\bigr] \\
            & = s\bigl[(\tfrac{1 + t + t^\sigma}{s} + w) \ell_{\mathbold{v}} \ell_{\mathbold{v}}^\ast + (s^{2} + y^\sigma)(s^{\sigma} + y)^2 \\
            & \quad + (s^\sigma + y)(t + z) \ell_{\mathbold{v}}^\ast + (t^\sigma + z^\sigma) \ell_\mathbold{v}^2\bigr].
        \end{align*}
        The coefficients of $t^\sigma$ and $t$ in the preceding equation are
        \begin{align*}
            \ell_{\mathbold{v}} \ell_{\mathbold{v}}^\ast + s\ell_{\mathbold{v}}^2 = (x^\sigma + sx)\ell_{\mathbold{v}} & = A_{\mathbold{v}}(s, s^\sigma), \\
            \ell_{\mathbold{v}} \ell_{\mathbold{v}}^\ast + s (s^{\sigma} + y) \ell_{\mathbold{v}}^\ast = (x + sy) \ell_{\mathbold{v}}^\ast & = B_{\mathbold{v}}(s, s^\sigma),
        \end{align*}
        respectively.
        The remaining terms are
        \begin{align*}
            (1 + sw) \ell_{\mathbold{v}} \ell_{\mathbold{v}}^\ast + s (s^{2} + y^\sigma)(s^{\sigma} + y)^2 + s (s^{\sigma} + y)z \ell_{\mathbold{v}}^\ast + s z^\sigma \ell_{\mathbold{v}}^2 = C_{\mathbold{v}}(s, s^\sigma).
        \end{align*}
        Thus $s Q\bigl(\mathbold{p}(s, t) + \mathbold{v}\bigr) = A_{\mathbold{v}}(s,s^\sigma) \cdot  t^\sigma + B_{\mathbold{v}}(s,s^\sigma) \cdot t + C_{\mathbold{v}}(s,s^\sigma)$. 
        By \cref{claim:pst}, every common neighbour of $\mathbold{0}$ and $\mathbold{e_3}$ is some $\mathbold{p}(s, t)$. If this $\mathbold{p}(s, t)$ is adjacent to $\mathbold{v}$, then $\mathbold{p}(s, t) + \mathbold{v} \in \bF_q^\times \cdot D$ and so, by \cref{claim:Q}, $Q(\mathbold{p}(s, t) + \mathbold{v}) = 0$ which implies the claimed equation.
    \end{proofclaim}

    Define $\defn{\Pi} \coloneqq \Span{\mathbold{e_3}, \mathbold{e_4}}$.
    Fix $\mathbold{v}=\mathbold{v_3} = (x, y, z, w)$, some vertex of $\bF_q^4$ distinct from $\mathbold{0}$ and $\mathbold{e_3}$. 
    Consider some $\mathbold{p}(s, t) \in \Gamma(\mathbold{0}, \mathbold{e_3}, \mathbold{v_3})$. We will show that
    \begin{enumerate}[label=\alph{*}., ref=\alph{*}]
        \item\label{part:on-Pi} if $\mathbold{v_3} \in \Pi$, then there is at most one possibility for $s$;
        \item\label{part:off-Pi} if $\mathbold{v_3} \notin \Pi$, then, for each value of $s \in \bF_q^{\times}$, there are at most two possibilities for $t$.
    \end{enumerate}
    These imply \ref{part':triple-common} showing that the number of possibilities for $(s, t)$ is at most $\max\set{q, 2(q - 1)} < 2q$. First suppose that $\mathbold{v_3} \in \Pi$ and so $x = y = 0$. Then $A_{\mathbold{v}}(s, s^{\sigma}) = B_{\mathbold{v}}(s, s^{\sigma}) = 0$, $\ell_{\mathbold{v}}(s, s^{\sigma}) = s^{\sigma + 1}$, $\ell_{\mathbold{v}}^\ast(s, s^{\sigma}) = (s^{\sigma + 1})^{\sigma} = s^{\sigma + 2}$ and so
    \begin{align*}
        C_{\mathbold{v}}(s, s^{\sigma}) & = (1 + ws) s^{2 \sigma + 3} + s \cdot s^2 \cdot s^{2 \sigma} + z s \cdot s^{\sigma} \cdot s^{\sigma + 2} + z^{\sigma} s \cdot s^{2 \sigma + 2} \\
        & = s^{2 \sigma + 3} (ws + z + z^{\sigma}).
    \end{align*}
    Thus, since $s \neq 0$, \eqref{eq:trip-adj} becomes $ws + z + z^{\sigma} = 0$. Now, if $w = 0$, then $z^{\sigma} = z$. Further, if $z = 0$, then $\mathbold{v_3} = \mathbold{0}$, a contradiction. On the other hand, if $z \neq 0$, then $z^{\sigma - 1} = 1$ and so $z = 1$ which gives $\mathbold{v_3} = \mathbold{e_3}$, another contradiction. Thus $w \neq 0$ and so $s = (z + z^\sigma)/w$ which gives \ref{part:on-Pi}.

    Now suppose that $\mathbold{v_3} \notin \Pi$. 
    Fix a value for $s \in \bF_q^{\times}$. 
    If $s^{\sigma + 1} = x$, then $\mathbold{p}(s, t)$ and $\mathbold{v}$ have the same first coordinate and so $\mathbold{p}(s, t) = \mathbold{v} + \lambda \mathbold{e_4}$ and hence $t = z$. 
    Otherwise $x \neq s^{\sigma + 1}$. 
    Now $A_{\mathbold{v}}(s, s^\sigma) = x(s + x^{\sigma - 1}) (s^{\sigma + 1} + x)$. Since $x \neq s^{\sigma + 1}$, we have $s + x^{\sigma - 1} \neq 0$ and $s^{\sigma + 1} + x \neq 0$. Hence if $x \neq 0$, then $A_{\mathbold{v}}(s, s^\sigma) \neq 0$. On the other hand, if $x = 0$, then $y \neq 0$ (since $\mathbold{v_3} \notin \Pi$) and so $B_{\mathbold{v}}(s, s^{\sigma}) = ys^{3 + \sigma} \neq 0$. Hence, \eqref{eq:trip-adj} is a non-zero polynomial and so \cref{lem:poly} implies there are at most two possibilities for $t$ and so gives \ref{part:off-Pi}.

    We finally turn to \ref{part':quad-common} which we will prove with $S = \Pi \setminus \set{\mathbold{0}, \mathbold{e_3}}$. Let $\mathbold{v_3}, \mathbold{v_4}$ not both in $S$ and such that $\mathbold{0}, \mathbold{e_3}, \mathbold{v_3}, \mathbold{v_4}$ are all different. We may and will assume that $\mathbold{v_3} \notin S$. Consider some common neighbour $\mathbold{p}(s, t)$ of all four vertices.
    It suffices to show that there are at most 1250 possibilities for $s$ as, by \ref{part:off-Pi}, this implies there are at most 2500 possibilities for $(s, t)$.
    If $\mathbold{v_4} \in \Pi$, then \ref{part:on-Pi} implies this and so we may and will assume that $\mathbold{v_3}, \mathbold{v_4} \notin \Pi$. We will be counting roots of polynomials so need to keep track of the total degrees. Noting that $\ell_{\mathbold{v}}^\ast(X,Y) = X^2 Y + x^{\sigma}$ and using \defn{$\deg$} to denote the total degree we have
    \[
        \deg \ell_{\mathbold{v}} = 2, \qquad \deg \ell_{\mathbold{v}}^\ast = 3, \qquad \deg A_{\mathbold{v}} \leq 3, \qquad \deg B_{\mathbold{v}} \leq 4, \qquad \deg C_{\mathbold{v}} \leq 6.
    \]
    Write $(A, B, C)$ and $(A', B', C')$ for the triples of polynomials $(A_{\mathbold{v}}, B_{\mathbold{v}}, C_{\mathbold{v}})$ corresponding to $\mathbold{v_3} = (x, y, z, w)$ and $\mathbold{v_4} = (x', y', z', w')$. When evaluated at $(s, s^{\sigma})$, we have
    \begin{align}
        A \cdot t^{\sigma} + B \cdot t + C & = 0, \label{eq:sim-1} \\
        A' \cdot t^{\sigma} + B' \cdot t + C' & = 0. \label{eq:sim-2}
    \end{align}
    Our aim is to eliminate $t$ from these equations. $A'(s, s^{\sigma}) \cdot \eqref{eq:sim-1} + A(s, s^{\sigma}) \cdot \eqref{eq:sim-2}$ and $B'(s, s^{\sigma}) \cdot \eqref{eq:sim-1} + B(s, s^{\sigma}) \cdot \eqref{eq:sim-2}$ respectively give, when evaluated at $(s, s^{\sigma})$,
    \begin{equation}\label{eq:Delta-E-F}
        \Delta \cdot t = E, \qquad \Delta \cdot t^{\sigma} = F,
    \end{equation}
    where $\Delta = A'B + AB'$, $E = A'C + AC'$, and $F = B'C + BC'$. Note that
    \[
        \deg \Delta \leq 7, \qquad \deg E \leq 9, \qquad \deg F \leq 10.
    \]
    We also record that
    \begin{equation}\label{eq:ABC}
    \begin{aligned}
        A & = xX^2Y + x^\sigma XY + x^2 X + x^{\sigma + 1}, \\
        B & = yX^3 Y + xX^2 Y + x^\sigma y X + x^{\sigma + 1}, \\
        C & = wX^4 Y^2 + (wx + yz) X^3 Y + y^2 X^3 + x^\sigma(1 + z) XY + x^{\sigma + 1} + \text{other terms},
        \end{aligned}
    \end{equation}
    where `other terms' denotes terms which are not $X^4 Y^2$, $X^3 Y$, $X^3$, $XY$ or constant terms.
    
    First suppose that $\Delta$ is the zero polynomial.
    We will show that $F$ is not the zero polynomial.
    If both $x, x'$ are zero, then $A = A' = 0$, $B = yX^3 Y$, $B' = y' X^3 Y$. Also, since $\mathbold{v_3}, \mathbold{v_4} \notin \Pi$, both $y$ and $y'$ are not zero.
    Note that $F = X^3Y(yC' + y'C)$. Thus, if $F$ is the zero polynomial, then $yC' = y'C$. 
    Looking at the coefficient of $X^3$ from \eqref{eq:ABC} gives $y(y')^2 = y'y^2$ and so $y = y'$. 
    Thus $C = C'$.
    The coefficient of $X^4 Y^2$ gives $w = w'$.
    Finally, the coefficient of $X^3 Y$ gives $yz = y'z'$ and so $z = z'$. But then $\mathbold{v_3} = \mathbold{v_4}$ which is a contradiction. Thus $F$ is not the zero polynomial in this case.
    
    If exactly one of $x, x'$ is zero, then, interchanging the vertices if necessary we have $x \neq 0$ and $x' = 0$. \eqref{eq:ABC} implies that $A' = 0$ and $B' = y'X^3 Y$ and so $0 = \Delta = B' A = y'X^3 Y A$. But $\mathbold{v_4} \notin \Pi$ so $y' \neq 0$ and hence $y' X^3 YA$ is not the zero polynomial, a contradiction. 

    If both $x, x'$ are not zero, then \eqref{eq:ABC} implies that the coefficients of $X^5 Y^2$ and $X^3 Y^2$ in $\Delta$ are
    \[
        x'y + y'x, \qquad (x')^\sigma x + x^\sigma x'.
    \]
    The second implies that $x^{\sigma - 1} = (x')^{\sigma - 1}$ and so $x = x'$ which then gives $y = y'$. Hence $B = B'$ and the constant term gives $B = B' \neq 0$. Thus, if $F$ is the zero polynomial, then $C = C'$. But then the coefficient of $X^4 Y^2$ gives $w = w'$ and the coefficient of $X Y$ gives $z = z'$. 
    This implies $\mathbold{v_3} = \mathbold{v_4}$ which is a contradiction. Thus $F$ is not the zero polynomial in this case.

    We have shown that in each case, $F$ is not the zero polynomial. However, $\Delta$ is, so \eqref{eq:Delta-E-F} implies that $F(s, s^{\sigma}) = 0$ for every common neighbour $\mathbold{p}(s, t)$ of the four vertices. \Cref{lem:poly} then implies that the number of possibilities for $s$ is at most $2 \cdot (\deg F)^2 \leq 2 \cdot 10^2 = 200$, as required. Hence, we may and will assume that $\Delta$ is not the zero polynomial. If $x = x' = 0$, then $A = A' = 0$ and so $\Delta = 0$, a contradiction. 
    Thus, interchanging $\mathbold{v_3}, \mathbold{v_4}$ if necessary, we may and will assume that $x \neq 0$.
    
    We continue eliminating $t$ from \eqref{eq:Delta-E-F}. Let $G = E^{\ast} \Delta + F \Delta^\ast$ which has total degree at most $\max\set{2 \cdot 9 + 7, 10 + 2 \cdot 7} = 25$. 
    Now, when evaluated at $(s, s^{\sigma})$,
    \[
        G = E^{\ast} \Delta + F \Delta^{\ast} = E^{\sigma} \Delta + F \Delta^{\sigma} = \Delta^{\sigma} \cdot t^{\sigma} \cdot \Delta + \Delta \cdot t^{\sigma} \cdot \Delta^{\sigma} = 0,
    \]
    and so $G(s, s^{\sigma}) = 0$ for every common neighbour $\mathbold{p}(s, t)$ of the four vertices. If $G$ is not the zero polynomial, then \cref{lem:poly} shows that there are at most $2 \cdot (\deg G)^2 \leq 2 \cdot 25^2 = 1250$ possibilities for $s$, as required. From now on we may and will assume that $G$ is the zero polynomial and, in particular, $E^\ast \Delta = F \Delta^\ast$.
    Since $\Delta$ is not the zero polynomial, the previous equation rearranges to $(E/\Delta)^\ast = F/\Delta$.
    Define the rational function $\defn{f} \coloneqq E/\Delta$ and so $f^\ast = F/\Delta$. Note that
    \begin{align*}
        Af^\ast + Bf + C & = \Delta^{-1}[AF + BE + C \Delta] \\
        & = \Delta^{-1} [AB'C + ABC' + BA'C + BAC' + CA'B + CAB'] = 0.
    \end{align*}
    Applying $\ast$ gives $A^\ast f^2 + B^\ast f^\ast + C^\ast = 0$. Eliminating $f^\ast$ gives $AA^\ast f^2 + BB^\ast f + C^\ast A + CB^\ast = 0$. Write $f = M/N$ in its lowest terms. Multiplying the previous equation by $N^2$ shows that $N$ divides $AA^\ast$. Since
    \[
        A(0, 0) = B(0, 0) = C(0, 0) = x^{\sigma + 1} \neq 0,
    \]
    we have $N(0, 0) \neq 0$, so $f$ is defined at $(0, 0)$. Substituting
    $(0, 0)$ in $Af^{\ast} + Bf + C=0$, and writing $a = f(0, 0)$, gives $a^\sigma + a + 1 = 0$. But then
    \[
        a^2 = (a^\sigma)^\sigma = (a + 1)^\sigma = a^\sigma + 1 = a,
    \]
    so $a \in \set{0, 1}$. But these both satisfy $a^\sigma + a + 1 = 1$, which is the final contradiction.
\end{proof}

\end{document}